\documentclass[a4paper]{amsart}

\usepackage{amssymb, enumitem}
\usepackage[all]{xy}
\usepackage{hyperref, aliascnt, graphicx}
\usepackage{subfigure}
\usepackage{pst-all}


\newtheorem{lma}{Lemma}[section]

\newaliascnt{thmCt}{lma}
\newtheorem{thm}[thmCt]{Theorem}
\aliascntresetthe{thmCt}

\newaliascnt{corCt}{lma}
\newtheorem{cor}[corCt]{Corollary}
\aliascntresetthe{corCt}

\newaliascnt{prpCt}{lma}
\newtheorem{prp}[prpCt]{Proposition}
\aliascntresetthe{prpCt}

\newtheorem*{thm*}{Theorem}
\newtheorem*{cor*}{Corollary}
\newtheorem*{prop*}{Proposition}

\theoremstyle{definition}

\newaliascnt{pgrCt}{lma}

\aliascntresetthe{pgrCt}

\newaliascnt{dfnCt}{lma}
\newtheorem{dfn}[dfnCt]{Definition}
\aliascntresetthe{dfnCt}

\newaliascnt{rmkCt}{lma}
\newtheorem{rmk}[rmkCt]{Remark}
\aliascntresetthe{rmkCt}

\newaliascnt{rmksCt}{lma}
\newtheorem{rmks}[rmksCt]{Remarks}
\aliascntresetthe{rmksCt}

\newaliascnt{exaCt}{lma}
\newtheorem{exa}[exaCt]{Example}
\aliascntresetthe{exaCt}

\newaliascnt{exasCt}{lma}
\newtheorem{exas}[exasCt]{Examples}
\aliascntresetthe{exasCt}

\newaliascnt{cnjCt}{lma}
\newtheorem{cnj}[cnjCt]{Conjecture}
\aliascntresetthe{cnjCt}

\DeclareMathOperator{\Hom}{Hom}
\newcommand{\ZZ}{{\mathbb{Z}}}
\newcommand{\NN}{{\mathbb{N}}}
\newcommand{\CC}{{\mathbb{C}}}
\newcommand{\RR}{{\mathbb{R}}}
\newcommand{\TT}{{\mathbb{T}}}
\newcommand{\id}{{\mathrm{id}}}
\newcommand{\ca}{$C^*$-algebra}
\newcommand{\stHom}{${}^*$-homomorphism}
\newcommand{\freeVar}{\_\,}
\newcommand{\CatC}{\ensuremath{\mathbf{C}}}
\newcommand{\CatGp}{\ensuremath{\mathbf{Gp}}}
\newcommand{\CatSet}{\ensuremath{\mathbf{Set}}}
\newcommand{\CatMon}{\ensuremath{\mathbf{Mon}}}
\newcommand{\CatCa}{\ensuremath{\mathbf{C}^*}}
\newcommand{\CatCaUnit}{\ensuremath{\mathbf{C}^*_1}}
\newcommand{\CatCaSepUnit}{\ensuremath{\mathbf{SC}^*_1}}
\newcommand{\CatCaCommSepUnit}{\ensuremath{\mathbf{AbSC}^*_1}}
\newcommand{\CatCpctMetr}{\ensuremath{\mathbf{CMetr}}}

\title{Semiprojectivity and semiinjectivity in different categories}
\date{\today}

\author{Hannes Thiel}
\address{Hannes Thiel
Mathematisches Institut, Fachbereich Mathematik und Informatik der
Universit\"at M\"unster, Einsteinstrasse 62, 48149 M\"unster, Germany.}
\email{hannes.thiel@uni-muenster.de}
\urladdr{www.math.uni-muenster.de/u/hannes.thiel/}

\thanks{The author was partially supported by the Deutsche Forschungsgemeinschaft (SFB 878).}

\keywords{projectivity, injectivity, semiprojectivity, semiinjectivity, free groups, absolute retracts, absolute neighborhood retracts, C*-algebras}

\subjclass[2010]%
{Primary
18A05; 
Secondary
06B35, 
06F05, 
18A20, 
20E05, 
46L05, 
54C55, 
55M15. 
}

\begin{document}

\begin{abstract}
Projectivity and injectivity are fundamental notions in category theory.
We consider natural weakenings termed semi\-projectivity and semiinjectivity, and study these concepts in different categories.

For example, in the category of metric spaces, (semi)injective objects are precisely the absolute (neighborhood) retracts.
We show that the trivial group is the only semiinjective group, while every free product of a finitely presented group and a free group is semiprojective.

To a compact, metric space $X$ we associate the commutative \ca{} $C(X)$.
This association is contravariant, whence semiinjectivity of $X$ is related to semiprojectivity of $C(X)$.
Together with Adam S{\o}rensen, we showed that $C(X)$ is semiprojective in the category of all \ca{s} if and only if $X$ is an absolute neighborhood retract with $\dim(X)\leq 1$.
\end{abstract}

\maketitle

\section{Introduction}

While being fundamental in category theory, the concepts of projectivity and injectivity are often very restrictive.
It is therefore natural to consider weaker versions of these notions.

For example, injective objects in the category of metric spaces and continuous maps are precisely the absolute retracts introduced by Borsuk in 1931.
He also defined a generalization, called absolute neighborhood retracts;
see \autoref{dfn:ANR}.
Therefore, being an absolute neighborhood retract is a weak form of injectivity in the category of metric spaces.
While not many spaces are absolute retracts, numerous naturally occurring spaces are absolute neighborhood retracts, including topological manifolds, polyhedra and CW-complexes.

Given a compact, metric space $X$, we associate the algebra $C(X)$ of continuous complex-valued functions on $X$.
This is a \ca{} with the supremum norm and pointwise operations.
The category \CatCpctMetr{} of compact, metric spaces, and the category \CatCaCommSepUnit{} of abelian, unital, separable \ca{s} are (contravariantly) equivalent.
We therefore think of \ca{s} as `noncommutative topological spaces'.

Using the contravariant correspondence between \CatCpctMetr{} and \CatCaCommSepUnit{}, injectivity of a compact, metric space $X$ corresponds precisely to projectivity of $C(X)$ in the category \CatCaCommSepUnit{}.
However, projectivity within the category \CatCaUnit{} of unital \ca{s} and unital \stHom{s} is more restrictive - since there are more lifting problems to be solved.

In 1985, Blackadar used the contravariant correspondence between \CatCpctMetr{} and \CatCaCommSepUnit{} to translate the concept of an absolute neighborhood retract to (noncommutative) \ca{s}.
He hence introduced a weak form of projectivity in the category of \ca{s}, called \emph{semiprojectivity};
see \cite[Definition~2.1]{Bla85NCShapeThy}, \cite[Definition~2.10]{Bla85ShapeThy}.

It is straightforward to generalize Blackadar's definition of semiprojectivity to general categories;
see \autoref{dfn:semiproj}.
The dual notion is called \emph{semiinjectivity}.
Semiinjective objects in the category of metric spaces are precisely the absolute neighborhood retracts.

We characterize semiprojectivity and semiinjectivity in the category of groups:
A group is semiprojective if and only if it is a retract of a free product of a finitely presented group and a free group;
see \autoref{prp:semiprojGp}.
On the other hand, only the trivial group is semiinjective;
see \autoref{prp:semiinjGp}.

One motivation to consider semiprojectivity and semiinjectivity is shape theory, which is a machinery to study an object by approximating it by better-behaved ones.
Absolute neighborhood retracts are the building blocks of shape theory of topological spaces.
Analogously, semiprojective \ca{s} are the building blocks of noncommutative shape theory.
In this context, it is of general interest to study semiprojective \ca{s}.

More specifically, semiprojectivity is a concept that is used at many different places in the theory of \ca{s}.
For instance, it is often used that a \ca{} that is `locally approximated' by a certain class of semiprojective \ca{s} is already isomorphic to an inductive limit of such \ca{s};
see \cite[Section~3]{Thi11arX:IndLimPj}.
Further, semiprojective \ca{s} are used to study and classify \ca{s} given as inductive limits or as crossed products of dynamical systems.
For example, Elliott's seminal classification of AF-algebras (inductive limits of finite-dimensional \ca{s}) by $K$-theory relies on the semiprojectivity of finite-dimensional \ca{s}.
Semiprojectivity also plays a crucial role in the analysis of the structure of crossed products by actions with the Rokhlin property in \cite{OsaPhi12CrProdRokhlin} and \cite{Gar17CrProdCpctRP}.

Blackadar asked to determine, in terms of $X$, when $C(X)$ is semiprojective among all \ca{s}.
It is easy to see that $X$ must be an absolute neighborhood retract, but is that sufficient?
Surprisingly, a dimensional restriction appears.
Together with Adam S{\o}rensen we showed in \cite{SoeThi12CharCommutSP} that $C(X)$ is semiprojective if and only if $X$ is an absolute neighborhood retract with $\dim(X)\leq 1$;
see \autoref{prp:CharNcANR}.
\\

This article is based on a talk presented at the conference `VI Coloquio Uruguayo
de Matem\'{a}tica', held during December 20 to 22, 2017, in Montevideo, Uruguay.

\section*{Acknowledgements}

I am thankful to Eusebio Gardella for his feedback on the first draft of this paper.

\section{Concrete categories, Monomorphisms, Epimorphisms}

Let \CatC{} be a category.
Given objects $X$ and $Y$ in \CatC, we use $\Hom_\CatC(X,Y)$ to denote the morphisms in \CatC{} from $X$ to $Y$.

Let $\varphi\colon X\to Y$ be a morphism in \CatC.
Then $\varphi$ is an \emph{epimorphism} (for short, $\varphi$ is \emph{epi}), denoted $\varphi\colon X\twoheadrightarrow Y$, if for every object $Z$ and morphisms $\psi_1,\psi_2\colon Y\to Z$ with $\psi_1\circ\varphi=\psi_2\circ\varphi$, we have $\psi_1=\psi_2$.
Dually, $\varphi$ is a \emph{monomorphism} (for short, $\varphi$ is \emph{mono}), denoted $\varphi\colon X\hookrightarrow Y$, if for every object $Z$ and morphisms $\psi_1,\psi_2\colon Z\to X$ with $\varphi\circ\psi_1=\varphi\circ\psi_2$, we have $\psi_1=\psi_2$.

\begin{exa}
Let \CatSet{} denote the category of sets and (ordinary) mappings.
A morphism in \CatSet{} is epi (mono) if and only if it is surjective (injective).
\end{exa}

Recall that \CatC{} is said to be \emph{locally small} if $\Hom_\CatC(X,Y)$ is a set for any objects $X$ and $Y$.
In this case, for each object $X$ we obtain a covariant hom functor $\Hom_\CatC(X,\freeVar)\colon\CatC\to\CatSet$ and a contravariant hom functor $\Hom_\CatC(\freeVar,X)\colon\CatC\to\CatSet$.

A \emph{concrete} category is a category $\CatC$ together with faithful functor $U\colon\CatC\to\CatSet$.
In this case, we think of an object $X$ in \CatC{} as a set (namely $U(X)$) with additional structure, and a morphism $\varphi\colon X\to Y$ is a mapping (namely $U(\varphi)\colon U(X)\to U(Y)$) that preserves the structure of the objects.
In a concrete category, we usually identify an object with its underlying set, and we identify a morphism with its underlying set mapping.

A faithful functor reflects epimorphisms and monomorphisms.
It follows that in a concrete category, every surjective (injective) morphism is epi (mono).
The converse need not hold;
see Examples~\ref{exa:Mon} and~\ref{exa:nonInjMono}.

Of particular interest is the case that \CatC{} is locally small and that there exists an object $G$ in \CatC{} (called a \emph{generator}) such that $\Hom_\CatC(G,\freeVar)\colon\CatC\to\CatSet$ is faithful;
see \cite[Corollary~4.5.9, p.155]{Bor94HandbookCat1}.
In that case, a morphism is mono if \emph{and only if} it is injective.
As noted above, the backward implication holds in every concrete category.
To show the forward implication, let $\varphi\colon X\to Y$ be a monomorphism.
To show that $\varphi$ is injective, let $x$ and $y$ be elements in the set underlying $X$ such that $\varphi(x)=\varphi(y)$.
Note that $x$ is an element of $\Hom_\CatC(G,X)$ and that $\varphi(x)$ is just the composition of morphisms $\varphi\circ x$ in $\CatC$.
Thus, the equality $\varphi(x)=\varphi(y)$ really means $\varphi\circ x=\varphi\circ y$.
Now it follows directly from the definition of monomorphism that $x=y$, as desired.

Dually, if a category has a cogenerator (an object $K$ such that $\Hom_\CatC(\freeVar,K)$ is faithful), then a morphism is epi if and only if it is surjective.

\begin{exa}
\label{exa:GenGp}
Let \CatGp{} denote the category of discrete groups and group homomorphisms, with the usual concretization that sends a group to its underlying set.
Then the group $\ZZ$ is a generator for \CatGp.
Indeed, given a group $G$, there is a natural bijection between elements in $G$ and group homomorphisms $\ZZ\to G$.
See also \cite[Example~4.5.17.c, p.160]{Bor94HandbookCat1}.
It follows that a morphism in \CatGp{} is mono if and only if it is injective.

The category \CatGp{} has no cogenerator;
see \cite[Proposition~4.7.3, p.169]{Bor94HandbookCat1}.
Nevertheless, a morphism in \CatGp{} is epi if and only if it is surjective.
The forward implication is not obvious;
see \cite{Lin70GpEpi}.
\end{exa}

\begin{exa}
\label{exa:Mon}
Let \CatMon{} denote the category of monoids and monoid homomorphisms.
The inclusion map $\varphi\colon\NN\to\ZZ$ is a non-surjective epimorphism.
To show that $\varphi$ is epi, let $M$ be a monoid, and let $\psi_1,\psi_2\colon\ZZ\to M$ be morphisms with $\psi_1\circ\varphi=\psi_2\circ\varphi$.
Then $\psi_1(k)=\psi_2(k)$ for all $k\geq 0$.
We have $1_M=\psi_1(0)=\psi_1(-k)\psi_1(k)$ and $1_M=\psi_2(k)\psi_2(-k)$ for all $k\geq 0$.
We deduce that
\[
\psi_1(-k)
=\psi_1(-k)\psi_2(k)\psi_2(-k)
=\psi_1(-k)\psi_1(k)\psi_2(-k)
=\psi_2(-k),
\]
for all $k\geq 0$.
Thus, $\psi_1=\psi_2$, as desired.
\end{exa}

\begin{exa}
\label{exa:nonInjMono}
Consider the category of pointed, path connected spaces with pointed, continuous maps.
We let $\TT=\{z\in\CC:|z|=1\}$ be the circle with base point $1$.
Let $\pi\colon(\RR,0)\to(S^1,1)$ be given by $\pi(t):=\exp(2\pi i t)$, for $t\in\RR$.
Then $\pi$ is a non-injective monomorphism.
To show that $\pi$ is a monomorphism, let $(X,x_0)$ be a path connected space, and let $f_1,f_2\colon(X,x_0)\to(\RR,0)$ be two pointed, continuous maps satisfying $\pi\circ f_1=\pi\circ f_2$.
Then $f_1(x_0)=0=f_2(x_0)$.
Given $x\in X$, choose a path from $x_0$ to $x$, that is, a continuous map $p\colon[0,1]\to X$ with $p(0)=x_0$ and $p(1)=x$.
Then $f_1\circ p$ and $f_2\circ p$ are two paths in $\RR$ starting at $0$.
We further have $\pi\circ f_1\circ p=\pi\circ f_2\circ p$.
Since $\pi$ is a covering, it has the unique path lifting property.
It follows that $f_1\circ p=f_2\circ p$ and hence
\[
f_1(x)=(f_1\circ p)(1)=(f_2\circ p)(1)=f_2(x).
\]
Thus, $f_1=f_2$, as desired.
\end{exa}

\begin{exa}
Let \CatCpctMetr{} be the category of compact, metric spaces and continuous mappings, with the usual concretization sending a topological space to its underlying set.
The one-point space is a generator for \CatCpctMetr.
The interval $[0,1]$ with its usual Hausdorff topology is a cogenerator for \CatCpctMetr;
see \cite[Proposition~4.7.8, p.173]{Bor94HandbookCat1}.
Thus, epimorphisms (monomorphisms) in \CatCpctMetr{} are precisely surjective (injective) continuous mappings.
\end{exa}

\section{Semiprojective and semiinjective objects}

The following definition is standard in category theory.

\begin{dfn}
\label{dfn:projInj}
Let \CatC{} be a category, and let $X$ be an object in \CatC.
Then $X$ is said to be \emph{projective} if for every epimorphism $\pi\colon Y\to Z$ and every morphism $\varphi\colon X\to Z$ there exists a morphism $\tilde{\varphi}\colon X\to Y$ such that $\pi\circ\tilde{\varphi}=\varphi$.
The morphism $\tilde\varphi$ is called a \emph{lift} of $\varphi$.

Dually, $X$ is said to be \emph{injective} if for every monomorphism $\iota\colon Z\to Y$ and every morphisms $\varphi\colon Z\to X$ there exists a morphism $\tilde{\varphi}\colon Y\to X$ such that $\tilde{\varphi}\circ\iota=\varphi$.
The morphism $\tilde\varphi$ is called an \emph{extension} of $\varphi$.

Thus, $X$ is projective (injective), if in the left (right) diagram below, for given solid arrows, the dashed arrow exists making the diagram commutative:
\[
\xymatrix{
& Y \ar@{->>}[d]^{\pi}
& & & Y \ar@{..>}[dl]_{\tilde{\varphi}} \\
X \ar[r]_{\varphi} \ar@{..>}[ur]^{\tilde{\varphi}} & Z
& & X & Z \ar[l]^{\varphi} \ar@{^{(}->}[u]_{\iota}
.
}
\]
\end{dfn}

\begin{exas}
\label{exa:projInjGp}
(1)
A group $G$ is projective (in \CatGp) if and only if $G$ is free.
The backward implication is easy to prove.
To show the forward implication, choose a free group $F$ and a surjective group homomorphism $\pi\colon F\to G$.
Using that $G$ is projective, we obtain a morphism $\tilde{\varphi}\colon G\to F$ that lifts the identity on $G$, that is, such that $\pi\circ\tilde{\varphi}=\id_G$.
This is shown in the following commutative diagram:
\[
\xymatrix{
& F \ar@{->>}[d]^{\pi} \\
G \ar[r]_{\id_G} \ar@{..>}[ur]^{\tilde{\varphi}} & G
.
}
\]
It follows that $\tilde{\varphi}$ is injective, and thus $G$ is (isomorphic to) a subgroup of $F$.
By the Nielsen-Schreier theorem, every subgroup of a free group is again free.
It follows that $G$ is free, as desired.

(2)
Eilenberg and Moore showed that the trivial group is the only injective object in \CatGp.
We include a short proof, which is a variation of the proof in \cite{Nog07EilMooThmNoInjGp}.

Let $G$ be an injective group, and let $g\in G$.
Let $F_2$ denote the free group of rank two, with generators $x$ and $y$.
Let $\varphi\colon F_2\to G$ be the morphism satisfying $\varphi(x)=1$ and $\varphi(y)=g$.
Let $\sigma\colon F_2\to F_2$ be the automorphism of $F_2$ satisfying $\sigma(x)=y$ and $\sigma(y)=x$.
We consider the semidirect product $F_2\rtimes_\sigma\ZZ_2$.
Let $\iota\colon F_2\to F_2\rtimes_\sigma\ZZ_2$ denote the natural inclusion morphism.
Use that $G$ is injective to obtain an extension $\tilde{\varphi}$ of $\varphi$.
This is shown in the following commutative diagram.
\[
\xymatrix{
G  \\
F_2 \ar[u]^{\varphi} \ar@{^{(}->}[r]_{\iota} & F_2\rtimes_\sigma\ZZ_2 \ar@{..>}[ul]_{\tilde{\varphi}}
.
}
\]
To simplify, we consider $F_2$ as a subgroup of $F_2\rtimes_\sigma\ZZ_2$.
Let $u\in F_2\rtimes_\sigma\ZZ_2$ be the element implementing $\sigma$.
Then $uxu^{-1}=y$.
We have $\tilde{\varphi}(x)=\varphi(x)=1$ and hence
\[
g
=\varphi(y)
=\tilde{\varphi}(uxu^{-1})
=\tilde{\varphi}(u)\tilde{\varphi}(x)\tilde{\varphi}(u)^{-1}
=\tilde{\varphi}(u)\tilde{\varphi}(u)^{-1}
= 1.
\]
Thus, $G=\{1\}$, as desired.
\end{exas}

Recall that a partially ordered set $I$ is said to be \emph{upward directed} (\emph{downward directed}) if for all $i,j\in I$ there exists $k\in I$ with $i,j\leq k$ (with $k\leq i,j$).
The following definition is standard.

\begin{dfn}
A \emph{direct system} (an \emph{inverse system}) in a category \CatC{} is an upward directed (downward directed) set $I$, together with objects $X_i$ for $i\in I$ and morphisms $\pi_{i,j}\colon X_i\to X_j$ for $i,j\in I$ with $i\leq j$, satisfying $\pi_{i,i}=\id_{X_i}$ for every $i\in I$ and satisfying $\pi_{i,k}=\pi_{j,k}\circ\pi_{i,j}$ for all $i,j,k\in I$ with $i\leq j\leq k$.
The morphisms $\pi_{i,j}$ are called the \emph{connecting morphisms} of the system.

Given a direct system $(I,X_i,\pi_{i,j})$, a \emph{direct limit} (also called \emph{inductive limit}) is an object $X$ together with a family $\pi=(\pi_{i,\infty})_{i\in I}$ of morphisms $\pi_{i,\infty}\colon X_i\to X$ satisfying $\pi_{j,\infty}\circ\pi_{i,j}=\pi_{i,\infty}$ for all $i,j\in I$ with $i\leq j$, and such that $(X,\pi)$ is universal with these properties.
Dually, given an an inverse system $(I,X_i,\pi_{i,j})$, an \emph{inverse limit} is an object $X$ together with a family $\pi=(\pi_{\infty,i})_{i\in I}$ of morphisms $\pi_{\infty,i}\colon X\to X_i$ satisfying $\pi_{i,j}\circ\pi_{\infty,i}=\pi_{\infty,j}$ for all $i,j\in I$ with $i\leq j$, and such that $(X,\pi)$ is universal with these properties.
\end{dfn}

The following definition of semiprojectivity was introduced by Blackadar in the sequential setting for the category of \ca{s};
see \cite[Definition~2.1]{Bla85NCShapeThy}.
See also \cite[Definition~2.10]{Bla85ShapeThy}.
The general (nonsequential) definition has also been considered in \cite{ChiLorThi18pre:nonsepSP}.

\begin{dfn}
\label{dfn:semiproj}
Let \CatC{} be a category, and let $X$ be an object in \CatC.
Then $X$ is said to be \emph{semiprojective} if for every inductive system $(I,Y_i,\pi_{i,j})$ in \CatC{} with connecting epimorphisms and for which the direct limit $\varinjlim Y_i$ exists, and for every morphism $\varphi\colon X\to\varinjlim Y_i$, there exist $i\in I$ and a morphism $\tilde{\varphi}\colon X\to Y_i$ such that $\pi_{i,\infty}\circ\tilde{\varphi}=\varphi$.
The morphism $\tilde\varphi$ is called a \emph{partial lift} of $\varphi$.

Dually, $X$ is said to be \emph{semiinjective} if for every inverse system $(I,Y_i,\iota_{i,j})$ in \CatC{} with connecting monomorphisms and for which the inverse limit $\varprojlim Y_i$ exists, and for every morphisms $\varphi\colon \varprojlim Y_i\to X$, there exist $i\in I$ and a morphism $\tilde{\varphi}\colon Y_i\to X$ such that $\tilde{\varphi}\circ\iota_{\infty,i}=\varphi$.
The morphism $\tilde\varphi$ is called an \emph{partial extension} of $\varphi$.
\end{dfn}

Given objects $X$ and $Y$ in a category, we say that $X$ is a \emph{retract} of $Y$ if there exist morphisms $\alpha\colon X\to Y$ and $\beta\colon Y\to X$ with $\beta\circ\alpha=\id_X$.
The proof of the following result is straightforward.

\begin{lma}
\label{prp:semiprojRetract}
Let \CatC{} be a category, and let $X$ and $Y$ be objects in \CatC.
Then: 
\begin{enumerate}
\item
If $X$ is a retract of $Y$, and if $Y$ is (semi)projective, then so is $X$.
\item
If $X$ and $Y$ are (semi)projective, then so is the coproduct $X\coprod Y$ (assuming it exists).
\end{enumerate}
\end{lma}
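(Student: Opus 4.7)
The plan is to handle the four cases (projective/semiprojective, retract/coproduct) by straightforward diagram chases, using the universal properties of $\coprod$ and of $\varinjlim$ together with the directedness of the index set in the semiprojective case.

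\textbf{Retract case.} Assume $X$ is a retract of $Y$ via $\alpha\colon X\to Y$ and $\beta\colon Y\to X$ with $\beta\circ\alpha=\id_X$. For projectivity, given an epimorphism $\pi\colon A\twoheadrightarrow B$ and a morphism $\varphi\colon X\to B$, I apply projectivity of $Y$ to the composition $\varphi\circ\beta\colon Y\to B$ to obtain a lift $\psi\colon Y\to A$ with $\pi\circ\psi=\varphi\circ\beta$, and then set $\tilde\varphi:=\psi\circ\alpha$; the identity $\beta\circ\alpha=\id_X$ gives $\pi\circ\tilde\varphi=\varphi$. For semiprojectivity, the same argument works verbatim: given $\varphi\colon X\to\varinjlim Y_i$, feed $\varphi\circ\beta\colon Y\to\varinjlim Y_i$ into semiprojectivity of $Y$ to obtain an index $i$ and a partial lift $\psi\colon Y\to Y_i$, and take $\tilde\varphi:=\psi\circ\alpha$.

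\textbf{Coproduct case, projective.} Let $\iota_X\colon X\to X\coprod Y$ and $\iota_Y\colon Y\to X\coprod Y$ be the coproduct injections. Given $\pi\colon A\twoheadrightarrow B$ and $\varphi\colon X\coprod Y\to B$, I use projectivity of $X$ and $Y$ to lift $\varphi\circ\iota_X$ and $\varphi\circ\iota_Y$ to morphisms $\psi_X\colon X\to A$ and $\psi_Y\colon Y\to A$, respectively. The universal property of the coproduct yields a unique morphism $\tilde\varphi\colon X\coprod Y\to A$ with $\tilde\varphi\circ\iota_X=\psi_X$ and $\tilde\varphi\circ\iota_Y=\psi_Y$. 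Both $\pi\circ\tilde\varphi$ and $\varphi$ have equal precompositions with $\iota_X$ and $\iota_Y$, so the uniqueness clause of the coproduct forces $\pi\circ\tilde\varphi=\varphi$.

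\textbf{Coproduct case, semiprojective.} This is the only step that requires anything beyond a one-line diagram chase, and it is the main (minor) obstacle: the partial lifts of $\varphi\circ\iota_X$ and $\varphi\circ\iota_Y$ produced by semiprojectivity of $X$ and $Y$ land, a priori, in stages $Y_i$ and $Y_j$ with different indices, so one cannot immediately apply the coproduct's universal property. The fix is to use that $I$ is upward directed: pick $k\in I$ with $i,j\leq k$, and replace the partial lifts by $\pi_{i,k}\circ\psi_X\colon X\to Y_k$ and $\pi_{j,k}\circ\psi_Y\colon Y\to Y_k$. These still lift the restrictions of $\varphi$, since $\pi_{k,\infty}\circ\pi_{i,k}=\pi_{i,\infty}$ and similarly for $j$. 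Now the universal property of $X\coprod Y$ produces $\tilde\varphi\colon X\coprod Y\to Y_k$, and composing with $\iota_X$ and $\iota_Y$ shows that $\pi_{k,\infty}\circ\tilde\varphi$ agrees with $\varphi$ on both coproduct injections, hence equals $\varphi$ by uniqueness.
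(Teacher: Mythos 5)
Your proof is correct; the paper omits the argument entirely (it says only that the proof is straightforward), and what you give is exactly the standard diagram chase one would expect, including the one genuine subtlety — using upward directedness of $I$ to move the two partial lifts into a common stage $Y_k$ before invoking the universal property of the coproduct. No gaps.
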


\begin{rmks}
(1)
Under suitable countability assumptions, it is usually enough to consider sequential direct limits in \autoref{dfn:semiproj}.
For example, a countable group $G$ is semiprojective if and only if for every sequential direct system $(\NN,D_k)$ of countable groups and with connecting epimorphisms, every morphism $G\to\varinjlim_k D_k$ has a partial lift.

To show the backward implication, let $(I,H_i,\pi_{i,j})$ be an arbitrary direct system with connecting epimorphisms in \CatGp, and let $\varphi\colon G\to\varinjlim_i H_i$ be a morphism.
Then there exist an increasing sequence of indices $i(0)\leq i(1)\leq\ldots$ in $I$, and countable subgroups $D_k\subseteq H_{i(k)}$ for all $k\in\NN$, such that the restriction of $\pi_{i(k),i(k+1)}$ to $D_k$ maps onto $D_{k+1}$, and such that $\varphi$ factors through $\varinjlim_k D_k$.
This means that there exists a morphism $\psi\colon G\to\varinjlim_k D_k$ such that $\varphi=\gamma\circ\psi$, where $\gamma\colon\varinjlim_k D_k\to\varinjlim_i H_i$ is the morphism obtained by the universal property of $\varinjlim_k D_k$ applied for the morphisms $(\pi_{i(k),\infty})_{|D_k}\colon D_k\to\varinjlim_i H_i$.
Given a partial lift $\tilde{\psi}\colon G\to D_k$ for $\psi$, we obtain a partial lift for $\varphi$ by composing $\tilde{\psi}$ with the inclusion $D_k\subseteq H_{i(k)}$.

The situation is shown in the following commutative diagram:
\[
\xymatrix@R-5pt@C-5pt{
& D_k \ar@{->>}[d] \ar@{^{(}->}[r]
& H_{i(k)} \ar@{->>}[d]^{\pi_{i(k),\infty}} \\
G \ar[r]^-{\psi} \ar@{..>}[ur]^{\tilde{\psi}} \ar@/_1pc/[rr]_{\varphi}
& \varinjlim_{k\in\NN} D_k \ar[r]^{\gamma}
& \varinjlim_{i\in I} H_i
.
}
\]

(2)
Similarly, a separable \ca{} $A$ is semiprojective if and only if every \stHom{} from $A$ to the direct limit of a sequential direct system of separable \ca{s} with surjective connecting maps has a partial lift.

(3)
The concept of direct and inverse limits in a category can be generalized to filtered (co)limits;
see \cite[Section~2.13, p.75ff]{Bor94HandbookCat1}.
In some categories, it may be appropriate to modify \autoref{dfn:semiproj} and consider filtered (co)limits instead of direct and inverse limits.
\end{rmks}

\begin{prp}
\label{prp:semiprojGp}
A group is semiprojective if and only if it is the retract of the free product of a finitely presented group and a free group.
\end{prp}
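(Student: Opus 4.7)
The plan is to treat the two implications separately. The backward direction reduces via \autoref{prp:semiprojRetract} to showing that free and finitely presented groups are semiprojective; the forward direction will exhibit $G$ as the limit of a specific direct system whose terms already have the desired form, so that semiprojectivity provides the retract.

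For the backward direction, free groups are projective by \autoref{exa:projInjGp}(1), and hence semiprojective. Given a finitely presented group $H = \langle x_1,\ldots,x_n \mid r_1,\ldots,r_m\rangle$ and a morphism $\varphi\colon H \to \varinjlim_i Y_i$ into the limit of a system with epi connecting maps, I would lift the finitely many elements $\varphi(x_k)$ to elements $y_k$ at a common stage $i_0$ (using that any finite set in the direct limit of groups lives at a common stage), and then pass to a later stage $j\geq i_0$ at which the finitely many relator images $r_\ell(y_1,\ldots,y_n)$ all become trivial, producing a partial lift $\tilde{\varphi}\colon H\to Y_j$. Since the coproduct in \CatGp{} is the free product, parts (2) and (1) of \autoref{prp:semiprojRetract} then yield semiprojectivity of $F\ast H$ and of any retract thereof.

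For the forward direction, fix a surjection $F(S)\twoheadrightarrow G$ with $F(S)$ the free group on a generating set $S$ of $G$, and let $N$ be its kernel. For each finite subset $U\subseteq N$ let $T_U\subseteq S$ denote the finite set of generators appearing in the words in $U$, let $N_U$ be the normal closure of $U$ in $F(S)$, and set $G_U := F(S)/N_U$. A Yoneda-style argument using the free product decomposition $F(S)=F(T_U)\ast F(S\setminus T_U)$ identifies
\[
G_U \;\cong\; \bigl(F(T_U)/N'_U\bigr) \ast F(S\setminus T_U),
\]
where $N'_U$ denotes the normal closure of $U$ in $F(T_U)$, exhibiting $G_U$ as the free product of a finitely presented group and a free group. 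Ordered by inclusion, the finite subsets $U\subseteq N$ make $(G_U)$ into a direct system with surjective connecting maps whose direct limit is $F(S)/N=G$. Applying semiprojectivity to $\id_G\colon G\to\varinjlim_U G_U$ yields a partial lift $\tilde{\varphi}\colon G\to G_U$ splitting the canonical surjection $G_U\twoheadrightarrow G$, which exhibits $G$ as a retract of $G_U$.

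The decisive step, which I expect to require the most care, is the free product decomposition of $G_U$: without isolating the unused generators $S\setminus T_U$ as a free factor, semiprojectivity would only realize $G$ as a retract of a quotient of an infinitely generated free group by finitely many relators, which need not be of the form asserted in the proposition. The decomposition works precisely because only finitely many letters of $S$ occur in the words of $U$, so the remaining generators appear freely.
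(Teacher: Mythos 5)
Your argument is correct. The backward direction is essentially the paper's: reduce via \autoref{prp:semiprojRetract} to finitely presented groups, push the finitely many generators into a finite stage, and pass to a later stage where the finitely many relators die. The forward direction reaches the same conclusion by a genuinely different, and leaner, construction. The paper also fixes a surjection $F(X)\twoheadrightarrow G$ with kernel $N$, but it insists that the finitely presented approximants be built on \emph{finite} subsets $A\subseteq X$, setting $G_{(A,B)}=F(A)/\langle B\rangle$ for finite $B\subseteq N\cap F(A)$; since the maps induced by the inclusions $F(A)\subseteq F(A')$ are not surjective, it must adjoin auxiliary free factors $F(X\times I_{(A,B)})$ and assemble surjective connecting morphisms from four separate pieces. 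Your system $G_U=F(S)/N_U$, indexed by the finite subsets $U\subseteq N$, keeps the full generating set at every stage, so the connecting maps are honest quotient maps and surjectivity, directedness, and the identification of the limit with $G$ are all immediate; the price is that the terms are not visibly of the required form, and you recover that a posteriori from the decomposition $G_U\cong\bigl(F(T_U)/N'_U\bigr)\star F(S\setminus T_U)$. That decomposition is indeed the crux, and it holds exactly as you say: a homomorphism out of $G_U$ is a homomorphism out of $F(T_U)\star F(S\setminus T_U)$ killing $U$, and since $U\subseteq F(T_U)$ this constraint only touches the first factor. (The degenerate cases $U=\emptyset$ or $S\setminus T_U=\emptyset$ are harmless.) Both routes are valid; yours buys simpler bookkeeping, while the paper's produces approximants whose finitely presented part sits on a genuinely finite generating set from the outset.
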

\begin{proof}
Let us show the backward implication.
Using \autoref{prp:semiprojRetract}, it remains to prove that every finitely presented group $H$ is semiprojective.
Choose a finitely generated free group $F$ and a finitely generated normal subgroup $N\lhd F$ such that $H$ is isomorphic to $F/N$.
We identify $H$ with $F/N$.
Let $r_1,\ldots,r_n\in N$ be a set of elements that generate $N$ as a normal subgroup of $F$.

To show that $H$ is semiprojective, let $(I,H_i,\pi_{i,j})$ be a direct system in \CatGp{} with connecting epimorphisms, and let $\varphi\colon H\to\varinjlim H_i$ be a morphism.
Let $\gamma\colon F\to H$ be the quotient map.
Since $F$ is free, and hence projective, we can choose $i_0\in I$ and a lift $\psi\colon F\to H_i$ of $\varphi\circ\gamma$.
Given $k\in\{1,\ldots,n\}$, we have
\[
\pi_{i,\infty}\big( \psi(r_k) \big)
= (\pi_{i,\infty}\circ\psi)(r_k)
= (\varphi\circ\gamma)(r_k)
= 1,
\]
which allows us to choose $i_k\geq i_0$ such that $\pi_{i,i_k}\big( \psi(r_k) \big)=1$.
Choose $i'\in I$ with $i_1,\ldots,i_n\leq i'$.
Then $\pi_{i,i'}$ maps $\psi(r_1),\ldots,\psi(r_n)$ to $1$.
It follows that $\psi(N)\subseteq\ker(\pi_{i,i'})$.
Thus, $\pi_{i,i'}\circ\gamma$ factors through $H$, which provides the desired partial lift.

To show the forward implication, assume that $G$ is a semiprojective group.
Choose a set $X$ and a surjective group homomorphism $\gamma\colon F(X)\to G$, where $F(X)$ denotes the free group on the set of generators $X$.
Set $N:=\ker(\gamma)$.
Given a subset $A\subseteq X$, we identify $F(A)$ in the obvious way with a subgroup of $F(X)$.
Set
\[
I := \big\{ (A,B) : A\subseteq X \text{ finite}, B\subseteq N \text{ finite}, B\subseteq F(A) \big\}.
\]
For $(A,B)\in I$ set $G_{(A,B)} := F(A)/\langle B\rangle$, where $\langle B\rangle$ denotes the normal subgroup of $F(A)$ generated by $B$.
Let $\gamma_{(A,B)}\colon F(A)\to G_{(A,B)}$ denote the quotient map.

We define a partial order on $I$ by setting $(A',B')\leq(A,B)$ if $A'\subseteq A$ and $B'\subseteq B$.
Then $I$ is an upward directed set.
For $(A,B)\in I$ we set
\[
I_{(A,B)}:= \big\{ (\tilde{A},\tilde{B})\in I : (\tilde{A},\tilde{B})\geq(A,B) \big\},
\]
and
\[
H_{(A,B)} := G_{(A,B)} \star F(X\times I_{(A,B)}).
\]

Given $(A',B')\leq(A,B)$, let us define a surjective morphism $\pi_{(A',B')}^{(A,B)}$ from $H_{(A',B')}$ to $H_{(A,B)}$.
The inclusion $F(A')\to F(A)$ induces a morphism $G_{(A',B')}\to G_{(A,B)}$.
We let $R$ be the subset of $X\times I_{(A',B')}$ such that $X\times I_{(A',B')}$ is the disjoint union of $A\times\{(A',B')\}$, $X\times I_{(A,B)}$, and $R$.
The first coordinate projection $A\times\{(A',B')\}\to A$ induces a morphism $F(A\times\{(A',B')\})\to F(A)$ that we postcompose with $\gamma_{(A,B)}$ to obtain a surjective morphism $F(A\times\{(A',B')\})\to G_{(A,B)}$.
We define $\pi_{(A',B')}^{(A,B)}$ as the free product of the morphisms $G_{(A',B')}\to G_{(A,B)}$, $F(A\times\{(A',B')\})\to G_{(A,B)}$, the identity on $F(X\times I_{(A,B)})$ and the trivial map $F(R)\to\{1\}$.
This is shown in the following diagram:
\[
\xymatrix@C-10pt{
H_{(A',B')}
\ar@{}[r]|{:=} \ar[d]_{\pi_{(A',B')}^{(A,B)}}
& G_{(A',B')} \ar@{}[r]|-{\star} \ar[d]
& F(A\times\{(A',B')\}) \ar@{}[r]|-{\star} \ar@{->>}[dl]
& F(X\times I_{(A,B)}) \ar@{}[r]|-{\star} \ar[dl]_{\cong}
& F(R) \\
H_{(A,B)} \ar@{}[r]|{:=}
& G_{(A,B)} \ar@{}[r]|-{\star}
& F(X\times I_{(A,B)})
}
\]
It is straightforward to verify that the maps $\pi_{(A',B')}^{(A,B)}$ are surjective and define an inductive system (over the index set $I$).
Moreover, there is a natural isomorphism $\varphi\colon G\to\varinjlim_{(A,B)\in I} G_{(A,B)}$.

Using that $G$ is semiprojective, we find $(A,B)\in I$ and a partial lift $\tilde{\varphi}\colon G\to G_{(A,B)}$ of $\varphi$.
This shows that $G$ is a retract of $G_{(A,B)}$, which is the free product of the finitely presented group $H_{(A,B)}$ and a free group.
\end{proof}

\begin{cor}
Every finitely presented group is semiprojective.
Moreover, every group is a direct limit of semiprojective groups
(and one may also assume that the connecting morphisms are surjective).
\end{cor}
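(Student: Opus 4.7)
The first claim is immediate from \autoref{prp:semiprojGp}: any finitely presented group $H$ equals the free product $H\star\{1\}$ of a finitely presented group with the trivial (free) group, and is trivially a retract of itself.

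For the second claim, my plan is to realize an arbitrary group $G$ as a direct limit, with surjective connecting morphisms, of groups to which \autoref{prp:semiprojGp} applies. Fix a presentation $G=F(X)/N$, with $F(X)$ the free group on a set $X$ and $N\lhd F(X)$. I would index by the upward directed set $I$ of finite subsets $B\subseteq N$, ordered by inclusion, and set $G_B:=F(X)/\langle B\rangle$, where $\langle B\rangle$ denotes the normal closure of $B$ in $F(X)$. For $B\subseteq B'$ the induced map $G_B\to G_{B'}$ is a further quotient, hence surjective, and since $\bigcup_{B\in I}\langle B\rangle=N$ there is a canonical isomorphism $G\cong\varinjlim_B G_B$.

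The crucial step is to show that each $G_B$ is a free product of a finitely presented group and a free group. Given finite $B\subseteq N$, all letters appearing in elements of $B$ lie in some finite $A\subseteq X$, so $B\subseteq F(A)$. Writing $F(X)=F(A)\star F(X\setminus A)$, I would verify the isomorphism
\[
G_B \;\cong\; \bigl(F(A)/\langle B\rangle_{F(A)}\bigr)\star F(X\setminus A),
\]
where $\langle B\rangle_{F(A)}$ is the normal closure of $B$ inside $F(A)$. The right-hand side then has the required form, since $F(A)/\langle B\rangle_{F(A)}$ is finitely presented and $F(X\setminus A)$ is free, so \autoref{prp:semiprojGp} yields semiprojectivity. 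The displayed isomorphism is the one step requiring actual verification: the map $F(X)\to(F(A)/\langle B\rangle_{F(A)})\star F(X\setminus A)$ built from the quotient on $F(A)$ and the identity on $F(X\setminus A)$ kills $B$ and hence factors through $G_B$, while the universal property of the free product supplies an inverse once one observes that $F(A)\hookrightarrow F(X)\twoheadrightarrow G_B$ vanishes on $\langle B\rangle_{F(A)}$. I do not anticipate a serious obstacle; the whole argument is essentially bookkeeping with universal properties of free products, normal closures and direct limits.
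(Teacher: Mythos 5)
Your argument is correct, and for the second claim it takes a slightly different (and cleaner) route than the one implicit in the paper. The first claim is exactly the content of the backward implication proved in \autoref{prp:semiprojGp} (a finitely presented $H$ is $H\star\{1\}$ and a retract of itself), so there is nothing to add there. For the second claim, the paper's intended argument reuses the system built in the forward implication of \autoref{prp:semiprojGp}: one writes $G$ as the limit of the groups $H_{(A,B)}=\bigl(F(A)/\langle B\rangle\bigr)\star F(X\times I_{(A,B)})$, where the large auxiliary free factors are grafted on precisely because the maps $F(A')/\langle B'\rangle\to F(A)/\langle B\rangle$ induced by inclusions $A'\subseteq A$ of finite generating sets are not surjective. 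You avoid this by quotienting the full free group: indexing by finite $B\subseteq N$ and taking $G_B=F(X)/\langle B\rangle$ makes every connecting morphism a genuine quotient map, and surjectivity plus $\varinjlim_B G_B\cong G$ are immediate since the $\langle B\rangle$ form a directed family of normal subgroups with union $N$. The one step you flag as needing verification, the isomorphism $F(X)/\langle B\rangle\cong\bigl(F(A)/\langle B\rangle_{F(A)}\bigr)\star F(X\setminus A)$, is a correct instance of the general fact that killing the normal closure of a subset of one free factor of $G_1\star G_2$ yields $\bigl(G_1/\langle S\rangle_{G_1}\bigr)\star G_2$, and your universal-property argument for it is the standard one. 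What the paper's more elaborate system buys is not this corollary but the forward implication of \autoref{prp:semiprojGp} itself, where one needs a surjective system converging to $G$ whose building blocks exhibit $G$ as a retract of a finitely presented piece; for the corollary alone, your construction is the more economical choice.
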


\begin{prp}
\label{prp:semiinjGp}
The trivial group is the only semiinjective object of \CatGp.
\end{prp}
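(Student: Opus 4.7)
The plan is to show that every element $g$ of a semiinjective group $G$ must equal $1$. I will do this by producing a specific inverse system in $\CatGp$ whose inverse limit is $F_2$, together with a morphism $F_2 \to G$ sending one generator to $g$ that provably fails to factor through any stage of the system; semiinjectivity will then force $g = 1$. This is an adaptation of the Eilenberg--Moore argument from \autoref{exa:projInjGp}: the single embedding $F_2 \hookrightarrow F_2 \rtimes_\sigma \ZZ_2$ used there must be replaced by an infinite descending chain whose intersection is exactly $F_2$.

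Concretely, let $H = F_2 \rtimes_\sigma \ZZ$, where $F_2 = F(x,y)$ and $\sigma$ is the involution swapping $x$ and $y$, and let $u \in H$ generate the $\ZZ$-factor, so $uxu^{-1} = y$ and $uyu^{-1} = x$. For each $n \geq 0$ let $X_n \leq H$ be the subgroup generated by $F_2$ and $u^{3^n}$; explicitly, $X_n = \{fu^k : f \in F_2,\ k \in 3^n\ZZ\}$, and since $3^n$ is odd, one has $\sigma^{3^n} = \sigma$, so $u^{3^n}$ still conjugates $x$ to $y$ inside $X_n$. The inclusions $X_0 \supseteq X_1 \supseteq X_2 \supseteq \cdots$ form an inverse system (indexing $\NN$ with its reverse order) whose connecting maps are injective, hence monomorphisms by \autoref{exa:GenGp}; and $\bigcap_n X_n = F_2$, so the inverse limit is $F_2$ with the natural inclusions $F_2 \hookrightarrow X_n$ as canonical maps from the limit.

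Given $g \in G$, define $\varphi \colon F_2 \to G$ by $\varphi(x) = 1$ and $\varphi(y) = g$. Semiinjectivity of $G$ applied to this inverse system yields some $n$ and a morphism $\tilde\varphi \colon X_n \to G$ with $\tilde\varphi|_{F_2} = \varphi$. The identity $u^{3^n} x u^{-3^n} = y$ in $X_n$ then gives
\[
g = \tilde\varphi(y) = \tilde\varphi(u^{3^n}) \tilde\varphi(x) \tilde\varphi(u^{3^n})^{-1} = \tilde\varphi(u^{3^n}) \cdot 1 \cdot \tilde\varphi(u^{3^n})^{-1} = 1,
\]
and since $g \in G$ was arbitrary, $G = \{1\}$.

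I expect the only genuine obstacle to be the choice of chain. Each $X_n$ must retain an element conjugating $x$ to $y$ (so that the extension $\tilde\varphi$ is forced to send $g$ to $1$), while the whole chain must shrink to exactly $F_2$ (so that $\varphi$ is indeed defined on the inverse limit). The naive candidate $F_2 \rtimes_\sigma \ZZ_2$ admits only two subgroups containing $F_2$ and so provides no useful chain, while the variant $F_2 \rtimes 2^n\ZZ \cong F_2 \times \ZZ$ loses the swap element (so $\varphi$ extends trivially and the argument collapses). Any chain $F_2 \rtimes k_n\ZZ$ with $k_n$ odd, $k_n \mid k_{n+1}$, and $k_n \to \infty$ will do; the choice $k_n = 3^n$ above is the cleanest.
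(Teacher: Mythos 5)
Your proof is correct, but it takes a genuinely different route from the paper's. The paper first establishes the stronger intermediate fact that every semiinjective group is already \emph{injective}: given an inclusion $H\subseteq K$ and a morphism $\varphi\colon H\to G$, it forms the amalgamated free product $\star_{n\in\NN,H}K$ of countably many copies of $K$ over $H$ and considers the decreasing chain of subgroups $\star_{n\geq m,H}K$, whose intersection is (isomorphic to) $H$; a partial extension to some stage, restricted along $\iota_m\colon K\to\star_{n\geq m,H}K$, extends $\varphi$ to $K$. It then invokes the Eilenberg--Moore theorem from \autoref{exa:projInjGp}(2) that the trivial group is the only injective group. You instead fold the Eilenberg--Moore conjugation trick directly into the inverse system: the chain $X_n=F_2\rtimes_\sigma 3^n\ZZ$ inside $F_2\rtimes_\sigma\ZZ$ has intersection exactly $F_2$ (so the inverse limit is $F_2$ with the inclusions as canonical maps), yet every stage retains an element $u^{3^n}$ conjugating $x$ to $y$, which forces $g=\tilde\varphi(y)=1$. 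Your verification that the limit is $F_2$ is completely elementary, whereas identifying $\bigcap_m\star_{n\geq m,H}K$ with $H$ rests on the normal form for amalgamated free products; on the other hand, the paper's argument yields the independently interesting statement that semiinjectivity implies injectivity in \CatGp. Your closing observation that the exponents must be odd (so that $u^{k_n}$ still acts by the swap, rather than centralizing $F_2$) is precisely the point that makes your construction work.
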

\begin{proof}
Let $G$ be a semiinjective group.
We show that $G$ is injective, whence it is trivial as noted in \autoref{exa:projInjGp}.

To show that $G$ is injective, let $H\subseteq K$ be an inclusion of groups, and let $\varphi\colon H\to G$ be a morphism.
We let $\star_{n\in\NN} K$ denote the free product of countably many copies of $K$, and for each $m\in\NN$ we let $\iota_m\colon K\to \star_{n\in\NN} K$ be the natural inclusion.
The amalgamated free product $\star_{n\in\NN, H} K$ is defined as the quotient of $\star_{n\in\NN} K$ by the normal subgroup generated by $\iota_n(h)\iota_m(h)^{-1}$, for $n,m\in\NN$ and $h\in H$.
For each $m\in\NN$ we let $\star_{n\geq m, H} K$ denote the subgroup of $\star_{n\in\NN, H} K$ generated by all except the first $m$ copies of $K$.
This defines a decreasing sequence of subgroups whose intersection is isomorphic to $H$.

Since $G$ is semiinjective, there exist $m$ and a partial extension $\tilde{\varphi}\colon \star_{n\geq m, H} K\to G$.
Composing with the morphism $\iota_m\colon K\to\star_{n\geq m, H} K$, we obtain a morphism $K\to G$ that extends $\varphi$, showing that $G$ is injective.
\end{proof}

The following definition is due to Borsuk.
For more details we refer to the books \cite{Bor67ThyRetracts} and \cite{Hu65ThyRetracts}.
Recall that a \emph{retract} from a topological space $Y$ to a subspace $X$ is a continuous map $r\colon Y\to X$ that satisfies $r(x)=x$ for all $x\in X$.

\begin{dfn}
\label{dfn:ANR}
Let $X$ be a metric space.
Then:
\begin{enumerate}
\item
$X$ is called an \emph{absolute retract} if whenever $X$ is embedded as a closed subset of another metric space $Y$, there exists a retract $Y\to X$.
\item
$X$ is called an \emph{absolute neighborhood retract} if whenever $X$ is embedded as a closed subset of another metric space $Y$, there exist a neighborhood $U$ of $X$ in $Y$ and a retract $U\to X$.
\end{enumerate}
\end{dfn}

The equivalence between~(1) and~(2) in the following result is a standard fact about absolute neighborhood retracts;
see for example \cite[Theorem~III.3.1, III.3.2, p.83f]{Hu65ThyRetracts}.
The equivalence between~(2) and~(3) follows using that compact, metric spaces are normal.

\begin{prp}
Let $X$ be a compact, metric space.
Then the following are equivalent:
\begin{enumerate}
\item
$X$ is an absolute (neighborhood) retract.
\item
Given a compact, metric space $Y$ and a closed subset $Z\subseteq Y$, and given a continuous map $\varphi\colon Y\to X$, there exists an extension of $\varphi$ to a continuous map $\tilde{\varphi}\colon Y\to X$ (there exists a closed neighborhood $C$ of $Z$ in $Y$ and an extension $\tilde{\varphi}\colon C\to X$).
\item
$X$ is a (semi)injective object in the category \CatCpctMetr.
\end{enumerate}
\end{prp}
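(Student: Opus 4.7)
The plan is to treat (1)$\Leftrightarrow$(2) as classical ANR theory, quoting \cite[Theorems~III.3.1, III.3.2]{Hu65ThyRetracts}, and to focus on (2)$\Leftrightarrow$(3), which is a dictionary between the lifting language of (semi)injectivity and the extension language of absolute (neighborhood) retracts. The one general fact powering this dictionary is that a monomorphism $\iota\colon Z\to Y$ in \CatCpctMetr{} is automatically a closed topological embedding: an injective continuous map from a compact space into a Hausdorff space is a homeomorphism onto its (automatically closed) image, and conversely the inclusion of a closed subspace is visibly mono. With this identification, the non-semi half of (2)$\Leftrightarrow$(3) is a tautology: an extension of $\varphi\colon Z\to X$ along a closed inclusion $Z\hookrightarrow Y$ is literally a lift along a monomorphism, so injectivity of $X$ in \CatCpctMetr{} is exactly the $Y$-extension clause of condition~(2).

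For the semi direction of (2)$\Rightarrow$(3), let $(I,Y_i,\iota_{i,j})$ be an inverse system in \CatCpctMetr{} with monomorphic connecting maps and limit $Y=\varprojlim Y_i$, and let $\varphi\colon Y\to X$. I would first argue that each projection $\iota_{\infty,i}\colon Y\to Y_i$ is injective and hence a closed embedding: two elements of $Y$ agreeing in some $Y_i$ agree everywhere, because by downward directedness of $I$ any other index $k$ admits a common lower bound $j\leq i,k$ through which one transports the equality. Fixing any $i$, condition~(2) applied to $Y\hookrightarrow Y_i$ yields a closed neighborhood $C\subseteq Y_i$ of $Y$ together with $\tilde{\varphi}\colon C\to X$ extending $\varphi$. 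The key remaining step is to find $j\leq i$ with $\iota_{j,i}(Y_j)\subseteq C$, for then $\tilde{\varphi}\circ\iota_{j,i}\colon Y_j\to X$ is the required partial extension (the compatibility $\iota_{j,i}\circ\iota_{\infty,j}=\iota_{\infty,i}$ makes it lift $\varphi$). I would obtain this by first establishing
\[
\bigcap_{j\leq i}\iota_{j,i}(Y_j)\;=\;\iota_{\infty,i}(Y)
\]
inside $Y_i$ ($\supseteq$ is compatibility; $\subseteq$ uses downward directedness to assemble the unique preimages $y_j\in Y_j$ of a common point into a well-defined element of $\varprojlim Y_i$), and then noting that since $\iota_{\infty,i}(Y)\subseteq\mathrm{int}(C)$, the compact sets $K_j:=\iota_{j,i}(Y_j)\setminus\mathrm{int}(C)$ have empty total intersection; by the finite intersection property some finite subfamily is already empty in intersection, and any common lower bound $j'$ of those indices satisfies $K_{j'}=\emptyset$.

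For the reverse semi direction (3)$\Rightarrow$(2), given a closed subset $Z\subseteq Y$ in \CatCpctMetr{} and a morphism $\varphi\colon Z\to X$, I would form the sequential inverse system of closed neighborhoods $N_n:=\{y\in Y:\mathrm{dist}(y,Z)\leq 1/n\}$ with inclusions as (monomorphic) connecting maps. Since $Y$ is metric one has $\bigcap_n N_n=Z$, and a short universal-property check shows that the inverse limit of $(N_n)$ in \CatCpctMetr{} is canonically $Z$. Semiinjectivity of $X$ applied to $\varphi\colon Z=\varprojlim_n N_n\to X$ then produces some $n$ and an extension $\tilde{\varphi}\colon N_n\to X$, which is precisely the closed-neighborhood extension demanded by~(2). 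The role of normality signalled by the author is that in the abstract formulation, the closed neighborhoods of a closed set in a normal space form a downward directed family whose intersection is the set; in a metric space the $N_n$ make this concrete.

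I expect the main obstacle to be the identity $\bigcap_{j\leq i}\iota_{j,i}(Y_j)=\iota_{\infty,i}(Y)$, which has to be checked carefully against the downward-directed indexing convention used in this paper; once it is in place, the remainder is either classical ANR material or formal manipulation of the definitions.
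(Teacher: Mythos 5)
Your proposal is correct and follows the route the paper indicates (the paper states this proposition without a written proof, citing Hu for (1)$\Leftrightarrow$(2) and remarking only that (2)$\Leftrightarrow$(3) uses normality of compact metric spaces): you supply exactly the missing ingredients, namely that monomorphisms in \CatCpctMetr{} are closed embeddings, that the limit projections of an inverse system with injective connecting maps are closed embeddings whose image is the directed intersection $\bigcap_{j\leq i}\iota_{j,i}(Y_j)$, and that the closed neighborhoods $N_n=\{y:\operatorname{dist}(y,Z)\leq 1/n\}$ realize a closed subset $Z$ as an inverse limit. The compactness/finite-intersection argument forcing some $\iota_{j,i}(Y_j)$ into the closed neighborhood $C$ is the only non-formal step, and you handle it correctly.
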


\section{\texorpdfstring{$C^*$-algebras}{C*-algebras}}

A \ca{} is a Banach algebra $A$ with an involution such that $\|a^*a\|=\|a\|^2$ for all $a\in A$.
A \stHom{} between \ca{s} is a multiplicative, ${}^*$-preserving, linear map.
We let \CatCa{} denote the category of \ca{s} and \stHom{s}.
The naive concretization of \CatCaUnit{} associates to every \ca{} its (usual) underlying set.
However, this functor $\CatCa\to\CatSet$ is not representable.

Nevertheless, \CatCa{} has a generator.
Indeed, let $G:=C^*(x:\|x\|\leq 1)$ be the universal \ca{} generated by a contraction.
Given a \ca{} $A$, there is a natural bijection between $\Hom_{\CatCa}(G,A)$ and the elements in the unit ball of $A$.
To see that $G$ is a generator, we note that two \stHom{s} $A\to B$ are equal if and only if they agree on the unit ball of $A$.
Hence, a morphism in \CatCa{} is mono if and only if it is injective.

The category \CatCa{} has no cogenerator.
(The proof is analogous to that for \CatGp.)
Nevertheless, a morphism in \CatCa{} is epi if and only if it is surjective.
As for \CatGp, the forward implication is not obvious;
see \cite[Proposition~2]{Rei70EpiSurj} and \cite{HofNee95EpiCaSurj}.

It follows that isomorphisms in \CatCa{} are exactly the bijective \stHom{}, also called ${}^*$-isomorphisms, and such maps are automatically isometric.

For simplicity, we will restrict attention to the subcategory \CatCaSepUnit{} of unital, separable \ca{s} and unital \stHom{s}.
We let $\CatCaCommSepUnit$ denote the full subcategory of \CatCaSepUnit{} of \emph{abelian}, unital, separable \ca{s}.

\begin{exas}
\label{exa:ca}
(1)
Given a Hilbert space $H$, the algebra $\mathcal{B}(H)$ of bounded linear operators on $H$, equipped with the operator norm and the natural involution, is a unital \ca{}.
By the Gelfand-Naimark theorem, every \ca{} is ${}^*$-isomorphic to norm-closed ${}^*$-subalgebra of $\mathcal{B}(H)$ for some Hilbert space $H$;
see \cite[Corollary~II.6.4.10, p.109]{Bla06OpAlgs}.

(2)
For the Hilbert space $H=\ell^2(\{1,2,\ldots,n\})$, we obtain that $\mathcal{B}(H)\cong M_n(\CC)$, the algebra of complex $n\times n$-matrices, has the structure of a \ca{}.
By the Artin-Weddenburn theorem, a \ca{} is finite-dimensional (as a complex vector space) if and only if it is isomorphic to a finite direct sum of matrix algebras.

(3)
Let $X$ be a compact, metric space.
Set
\[
C(X) := \big\{ f\colon X\to\CC : f \text{ is continuous} \big\},
\]
equipped with pointwise addition, multiplication and involution, and with the norm
\[
\|f\| := \sup \big\{ |f(x)| : x\in X \big\},
\]
for $f\in C(X)$.
Then $C(X)$ is a unital, commutative, separable \ca{}.

If $Y$ is another compact, metric space, and if $\varphi\colon X\to Y$ is a continuous map, then $\varphi^*\colon C(Y)\to C(X)$ given by $\varphi^*(f):=f\circ\varphi$ is a unital \stHom.
This defines a contravariant functor $C(\freeVar)\colon\CatCpctMetr\to\CatCaCommSepUnit$.
\end{exas}

\begin{prp}[{Gelfand; \cite[Theorem~II.2.2.6, p.61]{Bla06OpAlgs}}]
\label{prp:equivCHAbCa}
Every abelian, unital, separable \ca{} is isomorphic to $C(X)$ for some compact, metric space $X$.
Moreover, the functor $C(\freeVar)\colon\CatCpctMetr\to\CatCaCommSepUnit$ defines a (contravariant) equivalence of categories.
\end{prp}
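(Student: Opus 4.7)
The plan is to construct, for each abelian unital separable \ca{} $A$, a compact metric space $X$ together with a natural \stHom{} $\Gamma\colon A\to C(X)$, and then verify that $\Gamma$ is an isomorphism and that the assignments $A\mapsto X$ and $X\mapsto C(X)$ fit together into a (contravariant) equivalence.

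First I would define $X:=\Hom_\CatCa(A,\CC)\setminus\{0\}$, the set of nonzero characters of $A$, equipped with the topology of pointwise convergence (weak-$*$ topology). A standard argument shows every character is contractive, so $X$ sits inside the closed unit ball of $A^*$, and an easy continuity check shows $X$ is weak-$*$ closed there; by Banach-Alaoglu this unit ball is weak-$*$ compact, hence $X$ is compact. Separability of $A$ implies the weak-$*$ topology on the unit ball of $A^*$ is metrizable (the countable set of seminorms coming from a dense sequence in $A$ induces a compatible metric), so $X$ is a compact metric space. Define the Gelfand transform $\Gamma(a)(\chi):=\chi(a)$; since pointwise evaluation is weak-$*$ continuous, $\Gamma(a)\in C(X)$, and $\Gamma$ is a unital \stHom{} by construction.

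The core analytic step is to show $\Gamma$ is isometric. For a self-adjoint $a\in A$, the C*-identity forces $\|a\|$ to equal the spectral radius of $a$, and in a commutative unital Banach algebra the spectrum of $a$ coincides with $\{\chi(a):\chi\in X\}$, i.e.\ with the range of $\Gamma(a)$; hence $\|\Gamma(a)\|_\infty=\|a\|$ for self-adjoint $a$, and the general case follows from $\|a\|^2=\|a^*a\|$. Surjectivity of $\Gamma$ is then a direct application of the Stone-Weierstrass theorem: the image of $\Gamma$ is a unital, conjugation-closed subalgebra of $C(X)$ that separates points (two distinct characters must differ on some element), and it is closed since $\Gamma$ is isometric with complete domain. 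Thus $\Gamma\colon A\to C(X)$ is a \stHom{}-isomorphism, proving the first assertion.

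For the equivalence of categories, I would exhibit natural isomorphisms on both compositions. The map $\Gamma_A\colon A\to C(\text{Spec}(A))$ just constructed is natural in $A$. In the other direction, for a compact metric $Y$ define $\eta_Y\colon Y\to\text{Spec}(C(Y))$ by $\eta_Y(y)=\mathrm{ev}_y$; this is continuous and injective (Urysohn separates points of $Y$), and it is surjective because every character of $C(Y)$ has a closed maximal ideal as kernel, which by compactness must contain the evaluation ideal at some point. Since $Y$ is compact and $\text{Spec}(C(Y))$ is Hausdorff, $\eta_Y$ is a homeomorphism, and naturality in $Y$ is immediate. Combining the two naturality statements, $C(\freeVar)$ and $\text{Spec}(\freeVar)$ are quasi-inverse contravariant functors.

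The main obstacle is the isometry of $\Gamma$, which is where the C*-identity is indispensable: without it one only obtains a norm-decreasing homomorphism into $C(X)$, and the image need not be closed. Everything else (compactness, metrizability under separability, Stone-Weierstrass, the identification of characters of $C(Y)$ with points of $Y$) is a packaging of standard facts; the delicate piece is really the passage from algebra to geometry via the spectral radius formula.
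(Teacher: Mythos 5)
The paper offers no proof of this proposition---it is quoted as Gelfand's classical theorem with a citation to Blackadar's book---and your argument is precisely the standard proof of Gelfand duality (character space with the weak-$*$ topology, Banach--Alaoglu plus separability for compact metrizability, isometry of the Gelfand transform via the spectral radius formula and the C*-identity, Stone--Weierstrass for surjectivity, and evaluation maps as the natural isomorphisms witnessing the equivalence), so it is correct and consistent with the cited source. The only point worth making explicit is that, having defined $X$ as the set of nonzero \emph{${}^*$-preserving} characters, your appeal to the Banach-algebra fact that the spectrum of $a$ equals the set of character values requires the standard observation that every nonzero multiplicative linear functional on a commutative \ca{} is automatically ${}^*$-preserving (equivalently, sends self-adjoint elements to real numbers); this is routine but should be noted, since otherwise the spectral radius computation sees a possibly larger character space than the one you topologized.
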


\begin{rmk}
Projectivity and semiprojectivity of \ca{s} is defined with respect to the category \CatCa{} of all \ca{s}.
When considering the category \CatCaUnit, the notion of projectivity changes.
On the other hand, a unital \ca{} is semiprojective (in \CatCa) if and only if it is semiprojective in \CatCaUnit.
\end{rmk}

\begin{rmk}
\label{rmk:semiprojImplANR}
Let $X$ be a compact, metric space.
If $C(X)$ is (semi)projective in \CatCaUnit{}, then it is also (semi)projective in \CatCaCommSepUnit{} - since there are fewer lifting problems to solve.
Using the (contravariant) equivalence between \CatCaCommSepUnit{} and \CatCpctMetr{} from \autoref{prp:equivCHAbCa}, we deduce that $C(X)$ is (semi)projective in \CatCaCommSepUnit{} if and only if $X$ is (semi)injective in \CatCpctMetr{}.
Thus, if $C(X)$ is (semi)projective in \CatCaUnit, then $X$ is an absolute (neighborhood) retract.
\end{rmk}

\begin{exas}
\label{exa:semiproj}
(1)
$\CC=C(\mathrm{pt})$ and $C([0,1])$ are projective in \CatCaUnit.

(2)
Set $S^1:=\{z\in\CC : |z|=1 \}$, the unit circle.
The \ca{} $C(S^1)$ is semiprojective, but not projective in \CatCaUnit.

(2)
Set $D^2:=\{z\in\CC : |z|\leq 1 \}$, the two-disc.
Then $D^2$ is an absolute retract.
However, the \ca{} $C(D^2)$ is not even semiprojective.

(3)
If $X$ is a compact, metric space with $S^1\subseteq X$, then $C(X)$ is not projective in \CatCaUnit.
Indeed, assume that $\iota\colon S^1\hookrightarrow X$ is an embedding.
Let $\varphi\colon S^1\to D^2$ be the inclusion map.
Since $D^2$ is an absolute retract, there exists an extension of $\varphi$ to a continuous map $\tilde{\varphi}\colon X\to D^2$.
This is shown in the left commutative diagram below.
\[
\xymatrix@R-5pt{
D^2
& & & & & \mathcal{T} \ar[d]^{\pi} \\
S^1 \ar[u]^{\varphi} \ar@{^{(}->}[r]_{\iota}
& X \ar@{-->}[ul]_{\tilde{f}}
& & C(D^2) \ar[dr]^{\tilde{\varphi}^*} \ar[rr]^{\varphi^*}
& & C(S^1) \\
& & & & C(X) \ar[ur]^{\iota^*}
.}
\]
We let $\mathcal{T}$ denote the Toeplitz algebra, that is, the sub-\ca{} of $\mathcal{B}(\ell^2(\NN))$ generated by the unilateral shift on $\ell^2(\NN)$.
Sending this shift to the identity map $\id\in C(S^1)$ induces a surjective \stHom{} $\pi\colon\mathcal{T}\to C(S^1)$.
The situation is shown in the right commutative diagram above.

If $C(X)$ were projective in \CatCaUnit{}, then there would exist a lift $\psi\colon C(X)\to\mathcal{T}$ for $\iota^*$.
Then $\psi\circ\tilde{\varphi}^*$ is a lift for $\varphi^*$.
The image of the identity map $\id\in C(D^2)$ under $\psi\circ\tilde{\varphi}^*$ is a normal element in $\mathcal{T}$ that lifts the unitary $\id\in C(S^1)$.
Using the Fredholm index one can show that no such lift exists, showing that $C(X)$ is not projective in \CatCaUnit.

(4)
If $X$ is a compact, metric space with $D^2\subseteq X$, then $C(X)$ is not semiprojective in \CatCaUnit.
This is shown similarly as in~(3);
see \cite[Remark~3.3]{SoeThi12CharCommutSP}.
\end{exas}

Let $X$ be a compact, metric space such that $C(X)$ is (semi)projective in \CatCaUnit.
As observed in \autoref{rmk:semiprojImplANR}, it follows that $X$ is an absolute (neighborhood) retract.
The above examples show that the converse does not hold.

\begin{cnj}[{Blackadar, \cite[II.8.3.8, p.163]{Bla06OpAlgs}}]
\label{cnj:Blackadar}
Let $X$ be a compact, metric space.
Then $C(X)$ is semiprojective if and only if $X$ is an absolute neighborhood retract with $\dim(X)\leq 1$.
\end{cnj}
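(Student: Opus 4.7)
The plan is to prove the biconditional by establishing each direction separately, building on Examples~\ref{exa:semiproj}.

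For the necessity direction (if $C(X)$ is semiprojective in \CatCaUnit{} then $X$ is an ANR with $\dim(X)\leq 1$), the ANR conclusion follows immediately from Remark~\ref{rmk:semiprojImplANR}. The dimension bound is the substantive part. The strategy is to generalize the Toeplitz-based obstruction of Example~\ref{exa:semiproj}(4): assuming $\dim(X)\geq 2$, one constructs a direct system $(A_k, \pi_{k,\ell})$ in \CatCaUnit{} with surjective connecting \stHom{s}, together with a unital \stHom{} $\varphi\colon C(X) \to \varinjlim_k A_k$ that admits no partial lift. The failure of lifting should be detected by a Fredholm-index computation, analogous to how the canonical unitary in $C(S^1)$ fails to lift to a normal contraction in the Toeplitz algebra.

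For the sufficiency direction (an ANR $X$ with $\dim(X)\leq 1$ has semiprojective $C(X)$), the plan is to reduce to a combinatorial case. A compact metric ANR of dimension at most~$1$ is homeomorphic to a finite topological graph, namely a finite $1$-dimensional CW complex. Via the contravariant equivalence of \autoref{prp:equivCHAbCa}, decomposing $X$ into vertices and edges presents $C(X)$ as an iterated pullback of copies of $\CC=C(\mathrm{pt})$ and $C([0,1])$, both of which are projective in \CatCaUnit{} by Example~\ref{exa:semiproj}(1). The key technical ingredient is then a closure property for semiprojectivity under pullbacks of the required form: if $A$ is semiprojective in \CatCaUnit, if $F$ is a finite-dimensional commutative \ca{} (the vertex algebra), and if $\pi\colon A\to F$ and $\rho\colon C([0,1])\to F$ are unital surjective \stHom{s} with $\rho$ given by evaluation at the two endpoints, then the pullback $A\times_F C([0,1])$ is again semiprojective in \CatCaUnit. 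Geometrically this corresponds to attaching a single edge to the existing graph, so iterating the lemma constructs $C(X)$ for any finite graph from projective building blocks while preserving semiprojectivity at each step.

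The main obstacle will be the dimension bound in the necessity direction. A compact $2$-dimensional ANR need not contain a topologically embedded disc, so the argument of Example~\ref{exa:semiproj}(4) does not apply verbatim. One must construct an obstructive direct system intrinsic to $X$, presumably using sufficiently fine open covers of $X$ and the associated nerve-type inductive system of \ca{s}. The resulting direct limit receives a canonical map from $C(X)$ by ANR-approximation, and the remaining work is to show that a partial lift would contradict a cohomological or $K$-theoretic invariant of $X$ that is nontrivial precisely when $\dim(X)\geq 2$.
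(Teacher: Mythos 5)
Your overall architecture (necessity via an index-type obstruction, sufficiency by assembling $C(X)$ from projective pieces) points in the right direction, but both halves rest on claims that fail. For necessity, the paper's route (sketched after \autoref{prp:CharNcANR}) is to invoke \autoref{prp:highDimANR}: a compact ANR with $\dim(X)\geq 2$ must contain one of the one-dimensional compacta $C_1$, $C_2$, $C_3$ --- a null sequence of circles --- and the obstruction is then a boosted Toeplitz/Fredholm-index argument run along that sequence of smaller and smaller circles. Your substitute, a ``cohomological or $K$-theoretic invariant of $X$ that is nontrivial precisely when $\dim(X)\geq 2$,'' cannot exist: the Bing--Borsuk space of \cite{BinBor64ANR3dimNoDisk} is a three-dimensional absolute retract, hence contractible, so every cohomological and $K$-theoretic invariant of it vanishes, yet its algebra of functions must fail to be semiprojective. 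The obstruction genuinely lives on embedded subspaces ($C_1$, $C_2$ or $C_3$) whose inclusions may well be null-homotopic in $X$, not on an invariant of $X$ itself; producing those subspaces is the hard topological content your plan is missing.

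For sufficiency, your first reduction is false: a compact metric ANR of dimension at most one need not be homeomorphic to a finite graph. The null sequence of arcs $([0,1]\times\{0\})\cup\bigcup_{n\geq 1}\{1/n\}\times[0,1/n]$ is a one-dimensional compact absolute retract that is not locally finite at the origin, hence not homeomorphic to any finite CW complex. Your pullback lemma (attaching a single edge preserves semiprojectivity) is plausible and would handle finite graphs, but that case was essentially known; the actual difficulty resolved in \cite{SoeThi12CharCommutSP} is precisely the passage from finite graphs to arbitrary one-dimensional compact ANRs, which requires presenting $X$ as a carefully controlled inverse limit of graphs and a genuinely infinite lifting argument rather than a finite induction on edges.
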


The analog of this conjecture for projectivity was solved by Chigogidze and Dranishnikov:

\begin{thm}[{\cite[Theorem~4.3]{ChiDra10NcAR}}]
\label{prp:CharNcAR}
Let $X$ be a compact, metric space.
Then $C(X)$ is projective in \CatCaUnit{} if and only if $X$ is an absolute retract with $\dim(X)\leq 1$.
\end{thm}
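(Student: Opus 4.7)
The plan is to prove the two implications of the biconditional separately, with the bulk of the work in the `if' direction.

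\emph{Forward direction} (\emph{$C(X)$ projective $\Rightarrow$ $X$ is AR with $\dim(X)\le 1$}).
Since $C(X)$ is projective in \CatCaUnit, it is \emph{a fortiori} projective in the full subcategory \CatCaCommSepUnit, because every lifting problem in the smaller category is already one in the larger. Through the contravariant equivalence of \autoref{prp:equivCHAbCa}, projectivity of $C(X)$ in \CatCaCommSepUnit{} corresponds to injectivity of $X$ in \CatCpctMetr, and by the proposition preceding \autoref{exa:ca} this means $X$ is an absolute retract. For the dimension bound I argue by contrapositive: assuming $\dim(X)\ge 2$, I would invoke the classical theorem of Borsuk that every $n$-dimensional compact absolute retract contains a topologically embedded $n$-cell. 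Then $D^2\hookrightarrow X$ as a closed subspace, so in particular $S^1\hookrightarrow X$ via the boundary circle, and the Fredholm-index obstruction of \autoref{exa:semiproj}(3), built from the Toeplitz extension $\pi\colon\mathcal{T}\twoheadrightarrow C(S^1)$, then prevents $C(X)$ from being projective in \CatCaUnit, contradicting the hypothesis.

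\emph{Backward direction} (\emph{$X$ is AR with $\dim\le 1$ $\Rightarrow$ $C(X)$ projective}).
An AR is contractible hence connected, so either $X$ is a single point (whence $C(X)=\CC$ is projective by \autoref{exa:semiproj}(1)) or $X$ is a one-dimensional compact absolute retract, which by classical continuum theory is a dendrite. The strategy is to exhibit $C(X)$ as a retract of a projective unital \ca{} and then apply part~(1) of \autoref{prp:semiprojRetract}. Concretely, embed $X$ as a closed subspace of a universal dendrite $W$ into which every dendrite embeds as a closed subspace; since $X$ is an AR, the closed inclusion $X\hookrightarrow W$ admits a continuous retraction $W\to X$, so $C(X)$ is a retract of $C(W)$ in \CatCaUnit, and it suffices to show $C(W)$ is projective. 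For this, I would present $W$ as an inverse limit of finite trees $T_n$ with monotone bonding retractions; each $C(T_n)$ is an iterated amalgamated pushout over $\CC$ of copies of $C([0,1])$, hence projective by \autoref{exa:semiproj}(1) together with the stability of projectivity under such pushouts; and I would then construct a lift at the limit via a quantitative perturbation argument in the spirit of Arveson and Blackadar.

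\emph{Main obstacle.}
The hard point is this final passage to the inverse limit: projectivity is \emph{not} automatically stable under inverse limits of commutative \ca{s} (semiprojectivity is precisely the notion designed to accommodate the dual situation on the inductive side). The technical heart of the proof is therefore to choose the lifts at each tree stage $T_n$ compatibly, with a summable sequence of perturbation norms on a fixed dense set of generators, so that the partial lifts converge in norm to a bona fide unital \stHom{} lifting the given $\varphi\colon C(W)\to A$ through the surjection $B\twoheadrightarrow A$. Controlling the lifts as an edge of $T_n$ is refined into several edges of $T_{n+1}$ meeting at a branch point of potentially unbounded order in the limit $W$ is the delicate combinatorial and analytic point; by comparison, once Borsuk's $n$-cell theorem and \autoref{exa:semiproj}(3) are in hand the forward direction is comparatively routine.
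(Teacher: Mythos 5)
Your forward direction contains a genuine error at the dimension step. There is no ``classical theorem of Borsuk'' asserting that every $n$-dimensional compact absolute retract contains an embedded $n$-cell: Bing and Borsuk \cite{BinBor64ANR3dimNoDisk} constructed a $3$-dimensional compact absolute retract that contains no topological disk at all, and the paper invokes exactly this example to explain why one cannot argue via embedded disks. So your chain $\dim(X)\geq 2\Rightarrow D^2\hookrightarrow X\Rightarrow S^1\hookrightarrow X$ breaks at the first arrow. The conclusion you actually need, namely $S^1\hookrightarrow X$, is true but is a nontrivial result, \cite[Proposition~3.2]{ChiDra10NcAR}, valid for absolute (neighborhood) retracts of dimension at least $2$; the paper's sketch quotes that proposition directly and then runs the Toeplitz/Fredholm-index obstruction of \autoref{exa:semiproj}(3), just as you do. With that substitution your forward direction coincides with the paper's argument; the part deducing that $X$ is an absolute retract via \autoref{prp:equivCHAbCa} and \autoref{rmk:semiprojImplANR} is correct as written.

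For the backward direction the paper gives no proof at all (it simply cites \cite[Theorem~4.3]{ChiDra10NcAR}), so there is nothing to compare against, but as written your argument is a plan rather than a proof. The reductions --- a $1$-dimensional compact absolute retract is a dendrite, it is a retract of a universal dendrite $W$, and by \autoref{prp:semiprojRetract}(1) it suffices to show $C(W)$ is projective --- are sound, and projectivity of $C(T)$ for finite trees $T$ is indeed available. The entire content, however, sits in the step you yourself label the ``main obstacle'': producing an honest lift of $\varphi\colon C(W)\to A$ through a given surjection from compatible approximate lifts over the tree stages $T_n$. You assert that a summable perturbation scheme will do this but do not carry it out, and since projectivity is not stable under inverse limits, this convergence argument is precisely where all the work lies. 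Until it is supplied, the backward implication is not established.
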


Let us sketch the proof of the forward implication of \autoref{prp:CharNcAR}.
Assume that $C(X)$ is projective in \CatCaUnit{}.
Then $X$ is an absolute retract;
see \autoref{rmk:semiprojImplANR}.
To show that $\dim(X)\leq 1$, assume that $\dim(X)\geq 2$.
By \cite[Proposition~3.2]{ChiDra10NcAR}, it follows that $S^1\subseteq X$.
As sketched in \autoref{exa:semiproj}(3), this implies that $C(X)$ is not projective.

We remark that the topological result that $S^1$ embeds into $X$ uses both that $X$ is an absolute (neighborhood) retract and that $\dim(X)\geq 2$.
Indeed, the space $[0,1]$ is an example of an absolute retract that does not admit an embedding of the circle.
On the other hand, there exist compact metric spaces with $\dim(X)=\infty$ such that every closed subset of $X$ satisfies either $\dim(X)=0$ or $\dim(X)=\infty$.
In particular, such a space does not admit an embedding of the circle.
The point is that such a behaviour is not possible for `well-behaved' spaces such as absolute (neighborhood) retracts.

Together with Adam S{\o}rensen, we confirmed Blackadar's conjecture.

\begin{thm}[{\cite[Theorem~1.2]{SoeThi12CharCommutSP}}]
\label{prp:CharNcANR}
Let $X$ be a compact, metric space.
Then $C(X)$ is semiprojective if and only if $X$ is an absolute neighborhood retract with $\dim(X)\leq 1$.
\end{thm}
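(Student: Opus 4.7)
The plan is to prove the two implications separately. The forward direction parallels the Chigogidze--Dranishnikov argument sketched after \autoref{prp:CharNcAR}, while the backward direction requires substantially more \ca{}-theoretic input.

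For the forward direction, assume $C(X)$ is semiprojective. By \autoref{rmk:semiprojImplANR}, $X$ is an absolute neighborhood retract, so only the dimension bound $\dim(X)\leq 1$ remains. I would argue by contradiction and suppose $\dim(X)\geq 2$. The key topological input is the ANR analogue of the result used for \autoref{prp:CharNcAR}: every compact metric ANR of dimension at least $2$ contains an embedded $2$-disc $D^2$ (in the AR case, one only needed to embed $S^1$, by \cite[Proposition~3.2]{ChiDra10NcAR}). Granted this, \autoref{exa:semiproj}(4) directly yields that $C(X)$ is not semiprojective in \CatCaUnit, a contradiction.

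For the backward direction, suppose $X$ is an ANR with $\dim(X)\leq 1$. The plan splits into a topological step and a \ca{}-theoretic step. The topological step is to invoke the classical structure theorem that every compact metric ANR of dimension at most $1$ is homeomorphic to a finite graph, i.e.\ a finite $1$-dimensional CW-complex. Writing such a graph as a pushout in \CatCpctMetr{} of finitely many copies of $[0,1]$ and $S^1$ glued along a finite vertex set, the contravariant equivalence of \autoref{prp:equivCHAbCa} expresses $C(X)$ as a finite iterated pullback of copies of $C([0,1])$ and $C(S^1)$ along surjections onto finite-dimensional commutative \ca{s}.

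The \ca{}-theoretic step then combines \autoref{exa:semiproj}(1)--(2), which give that $C([0,1])$ is projective and $C(S^1)$ is semiprojective, with a closure result: the pullback of two semiprojective separable unital \ca{s} along surjections onto a finite-dimensional \ca{} is again semiprojective. Induction on the number of edges then yields semiprojectivity of $C(X)$.

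The main obstacle lies in both ingredients of the backward direction. On the topological side, the ANR hypothesis is precisely what excludes pathological $1$-dimensional continua such as the Hawaiian earring or infinitely branching dendrites, and one must carefully extract a genuine finite CW-structure from the local contractibility implicit in being an ANR; a cleaner route would be to show directly that $X$ is a retract of a finite graph $Y$ and then transfer semiprojectivity from $C(Y)$ to $C(X)$ using \autoref{prp:semiprojRetract}(1). On the \ca{}-theoretic side, the pullback-preservation result is delicate: a morphism from the pullback into an inductive limit $\varinjlim A_i$ decomposes into pieces that must be partially lifted into a common $A_i$ and then reconciled along the finite-dimensional base, and arranging this coherent reconciliation in a single step is the technical heart of the argument.
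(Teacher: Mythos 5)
There is a genuine gap in both implications, and in the forward direction it is precisely the trap the paper warns against. Your key topological input --- that every compact metric ANR of dimension at least $2$ contains an embedded disc $D^2$ --- is false: Bing and Borsuk constructed a $3$-dimensional absolute retract that contains no disc at all; see \cite{BinBor64ANR3dimNoDisk}. (The AR case of \autoref{prp:CharNcAR} only needs an embedded circle $S^1$, which is a much weaker conclusion and does hold by \cite[Proposition~3.2]{ChiDra10NcAR}; the disc analogue simply fails.) The paper's argument instead goes through \autoref{prp:highDimANR}: a compact ANR with $\dim(X)\geq 2$ contains one of the \emph{one-dimensional} spaces $C_1$, $C_2$, $C_3$ (disjoint shrinking circles, the Hawaiian earrings, or a variant), and a boosted version of the Toeplitz-lifting obstruction from \autoref{exa:semiproj}(3) shows that containing any of these already rules out semiprojectivity. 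So the obstruction one must extract from $\dim(X)\geq 2$ is a configuration of shrinking circles, not a disc.

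The backward direction also rests on a false structure theorem: a compact metric ANR with $\dim(X)\leq 1$ need not be homeomorphic to a finite graph, nor a retract of one. Dendrites such as the universal dendrite are one-dimensional absolute retracts with a dense set of branch points of infinite order; they admit no finite CW-structure and do not embed into any finite graph, so neither your pushout decomposition nor the fallback via \autoref{prp:semiprojRetract}(1) applies to them. In \cite{SoeThi12CharCommutSP} the positive direction is instead proved by realizing a one-dimensional compact ANR as an inverse limit of finite graphs with carefully controlled bonding maps and solving the lifting problem by an approximation argument; this is substantially more delicate than a finite induction over edges. Your proposed closure of semiprojectivity under pullbacks over finite-dimensional \ca{s} is a sensible ingredient for the finite-graph case, but it does not reach the general one-dimensional ANR.
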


Let us sketch the proof of the forward implication of \autoref{prp:CharNcANR}.
Assume that $C(X)$ is semiprojective in \CatCaUnit{}.
Then $X$ is an absolute neighborhood retract;
see \autoref{rmk:semiprojImplANR}.
To show that $\dim(X)\leq 1$, assume that $\dim(X)\geq 2$.
If we could deduce that $D^2$ embeds into $X$, then we would conclude that $C(X)$ is not semiprojective as mentioned in \autoref{exa:semiproj}(3).

The problem is that $\dim(X)\geq 2$ does not imply $D^2\subseteq X$.
Indeed, Bing and Borsuk constructed an absolute retract $Y$ such that $\dim(Y)=3$, but such that $D^2$ does not embed into $Y$;
see \cite{BinBor64ANR3dimNoDisk}.
Thus, we cannot assume that a disc embeds into $X$.
Instead, we use the following topological result:

\begin{lma}[{\cite[Remark~3.4]{SoeThi12CharCommutSP}}]
\label{prp:highDimANR}
Let $X$ be a compact, metric space that is an absolute neighborhood retract.
Assume that $\dim(X)\geq 2$.
Then $X$ contains either the space $C_1$ of disjoint `smaller and smaller circles', the Hawaiian earrings space $C_2$, or the space $C_3$ that is a variant of the Hawaiian earrings, each given as subsets of $\RR^2$: ($S(x,r)$ denotes the circle of radius $r$ around $x$.)
\begin{align*}
C_1 &= \big\{ (0,0) \big\} \cup \bigcup_{k\geq 1}S((1/2^k,0),1/(4\cdot 2^k)), \quad
C_2 =\bigcup_{k\geq 1}S((1/2^k,0),1/2^k), \\
C_3 &= \big\{ (x,x),(x,-x) : x\in[0,1] \big\} \cup \bigcup_{k\geq 1} \{1/k\}\times[-1/k,1/k].
\end{align*}
\begin{figure}[ht] \centering
\subfigure[Space $C_1$]{
\psset{unit=0.0117cm}
\degrees
\begin{pspicture}(-70,350)(70,-20)
\psline[linestyle=dotted, linewidth=2pt, dotsep=2pt](0,-10)(0,15)
\pscircle(0, 282){54}
\pscircle(0, 156){36}
\pscircle(0, 72){24}
\end{pspicture}
}
\quad
\subfigure[Space $C_2$]{
\psset{unit=0.018cm}
\begin{pspicture}(-20,110)(180,-110)
\pscircle(81, 0){81}
\pscircle(54, 0){54}
\pscircle(36, 0){36}
\pscircle(24, 0){24}
\pscircle(16, 0){16}
\pscircle[linestyle=dotted, linewidth=2pt, dotsep=1.2pt](10, 0){10}
\end{pspicture}
}
\quad
\subfigure[Space $C_3$]{
\psset{unit=1.17cm}
\degrees
\begin{pspicture}(-0.2,1.7)(3.4,-1.7)

\rput{-90}(2,2){
\parabola(0.4,0.56)(2,-2)
}

\psarc(0,0){2.981}{-32.192}{32.192}
\psarc(0,0){1.988}{-38.776}{38.776}
\psarc(0,0){1.325}{-46.253}{46.253}
\psarc(0,0){0.883}{-54.334}{54.334}
\psarc(0,0){0.589}{-62.431}{62.431}
\psarc[linestyle=dotted, linewidth=2pt, dotsep=1.5pt](0,0){0.393}{-69.775}{69.775}
\end{pspicture}
}
\end{figure}
\end{lma}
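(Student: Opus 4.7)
The plan is to locate a point $x_0\in X$ at which $X$ is locally at least two-dimensional, exhibit infinitely many embedded circles with diameters shrinking to zero at $x_0$, and then analyse the possible mutual intersection patterns of those circles; the three model spaces $C_1,C_2,C_3$ should correspond to the three possible configurations.  To locate $x_0$, put
\[
 S := \{x\in X : \dim(U)\geq 2 \text{ for every open neighborhood } U \text{ of } x\}.
\]
If $S$ were empty, each point of $X$ would have an open neighborhood of dimension at most $1$; the countable sum theorem for covering dimension combined with compactness of $X$ would then give $\dim(X)\leq 1$, contrary to hypothesis.  So I can fix $x_0\in S$ and a decreasing neighborhood base $U_1\supseteq U_2\supseteq\cdots$ of $x_0$ with $\mathrm{diam}(U_n)\to 0$; each $U_n$ may be taken open, of dimension at least $2$, and, shrinking a little (an open subset of an ANR is again an ANR), itself an ANR.

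By \cite[Proposition~3.2]{ChiDra10NcAR}---the same topological input used in the forward direction of \autoref{prp:CharNcAR}---any ANR of dimension at least $2$ contains an embedded circle, so I can choose $J_n\subseteq U_n$ with $J_n\cong S^1$.  Since $\mathrm{diam}(J_n)\leq\mathrm{diam}(U_n)\to 0$, the $J_n$ Hausdorff-converge along a subsequence to $\{x_0\}$.  Now I pass to a further subsequence on which the intersection pattern of consecutive $J_n$ stabilizes and split into cases.  If the $J_n$ are pairwise disjoint I read off an embedded copy of $C_1$ at once.  If infinitely many of the $J_n$ share a common point (necessarily converging to $x_0$) and pairwise touch only at isolated points, I use the local contractibility of $X$ at $x_0$ to move the $J_n$ into the position of Hawaiian earrings and extract an embedded $C_2$.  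In the remaining situation consecutive $J_n$ meet along non-degenerate arcs that shrink to $x_0$; two such arcs may be arranged to give the two diagonal rays of $C_3$, while the pieces of $J_n$ connecting them play the role of the vertical cross-bars, yielding an embedded $C_3$.

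The main obstacle is this last step: passing from an abstract shrinking sequence of circles in an ANR to an actual topological embedding of one of the three explicit planar models.  One has to use the local contractibility of $X$ at $x_0$ to put the $J_n$ into sufficient ``general position'' while keeping their diameters going to zero, establish a trichotomy for the limiting intersection pattern, and then realise that pattern as an honest embedding of $C_1$, $C_2$, or $C_3$ into $X$.  This is where the specific list of three model spaces becomes unavoidable, and where one must exploit the ANR structure of $X$ to perform the required small homotopies uniformly along the sequence---producing, for instance, the two shared diagonal arcs in the $C_3$ case, which cannot simply be read off from general position but must be built from the limit of the intersections $J_n\cap J_{n+1}$.
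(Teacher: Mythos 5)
Note first that the paper does not actually prove this lemma: it is imported verbatim from \cite[Remark~3.4]{SoeThi12CharCommutSP}, so the only thing to compare against is the argument in that reference. Your opening moves do match the real proof's strategy: locate a point $x_0$ of local dimension at least $2$ (the sum theorem argument for why such a point exists is fine), and produce embedded circles $J_n$ of diameter tending to $0$ near $x_0$. One small caveat already here: the circle-detection result \cite[Proposition~3.2]{ChiDra10NcAR} as quoted in this paper is for \emph{compact} ANRs, whereas your $U_n$ are open (hence non-compact) ANRs, so you need a local version of that statement; this is fixable but not free.

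The genuine gap is the one you flag yourself, and flagging it does not close it. Your trichotomy ``pairwise disjoint / meeting in isolated points / meeting along arcs'' is not exhaustive: $J_n\cap J_{n+1}$ is an arbitrary closed subset of a circle and can be, say, a Cantor set, so ``pass to a subsequence on which the intersection pattern of consecutive $J_n$ stabilizes'' is not a well-defined operation --- there is no finite list of patterns to stabilize among. The actual argument must perform surgery on the circles (e.g.\ replace $J_{n+1}$ by a circle assembled from a complementary arc of $J_{n+1}\setminus J_n$ together with an arc of $J_n$) to force the intersections into one of the three model configurations. Moreover, the device you propose for the $C_2$ and $C_3$ cases --- using local contractibility of $X$ at $x_0$ to ``move the $J_n$ into position'' --- does not work as stated: local contractibility lets you null-homotope small maps, but a homotopy of an embedded circle need not stay an embedding, and it certainly does not let you isotope the $J_n$ into the Hawaiian-earring or $C_3$ configuration. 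Since producing an honest embedding of $C_1$, $C_2$, or $C_3$ is precisely the content of the lemma, the proposal identifies the right skeleton but leaves the theorem unproved at its essential step; only the $C_1$ case (pairwise disjoint circles Hausdorff-converging to $x_0$) is actually complete as written.
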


A boosted version of the argument in \autoref{exa:semiproj}(3) shows the following result, which together with \autoref{prp:highDimANR} shows the forward implication of \autoref{prp:CharNcANR}.

\begin{lma}[{\cite{SoeThi12CharCommutSP}}]
Let $X$ be a compact, metric space.
If $X$ contains any of the spaces $C_1$, $C_2$, or $C_3$ from \autoref{prp:highDimANR}, then $C(X)$ is not semiprojective.
\end{lma}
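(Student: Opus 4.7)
The plan is to exhibit, for each of the spaces $C_1$, $C_2$, and $C_3$, an inductive system $(A_n)_{n\geq 1}$ in $\CatCa$ with surjective connecting morphisms, together with a unital $*$-homomorphism $\varphi\colon C(X)\to\varinjlim_n A_n$ that admits no partial lift. The driving obstruction is the Toeplitz mechanism recalled in \autoref{exa:semiproj}(3): the surjection $\pi\colon\mathcal{T}\twoheadrightarrow C(S^1)$ sending the unilateral shift to the coordinate unitary $z$ has no \emph{normal} lift of $z$, by a Fredholm-index argument. For projectivity it is enough to invoke this obstruction once; for semiprojectivity we must arrange it to recur at every finite stage, which is precisely why the geometric hypothesis that infinitely many circles accumulate enters.

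For $C_1$, the union of disjoint circles $S_k$ shrinking to the origin, I would take $\varphi\colon C(X)\twoheadrightarrow C(C_1)$ to be restriction. Define $A_n$ as the unital $C^*$-subalgebra of
\[
\CC \oplus \bigoplus_{1\leq k<n} C(S_k) \oplus \prod_{k\geq n} \mathcal{T}_k
\]
consisting of tuples $(a_\infty,a_1,\ldots,a_{n-1},b_n,b_{n+1},\ldots)$ with $\|a_k-a_\infty\|\to 0$ and $\|b_k-a_\infty\cdot 1_{\mathcal{T}}\|\to 0$ as $k\to\infty$. The connecting morphism $A_n\to A_{n+1}$ applies $\pi\colon\mathcal{T}_n\to C(S_n)$ in the $n$-th slot and is the identity elsewhere; it is a surjective unital $*$-homomorphism. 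The direct limit replaces every $\mathcal{T}_k$ by $C(S_k)$ and is canonically isomorphic to $C(C_1)$. Given a hypothetical partial lift $\tilde\varphi\colon C(X)\to A_n$, pick any $k\geq n$, use Tietze to extend the coordinate unitary $z\in C(S_k)$ to a function $u_k\in C(X)$, and compose $\tilde\varphi$ with the $k$-th coordinate projection $A_n\to\mathcal{T}_k$. The result is a unital $*$-homomorphism with commutative range whose value on $u_k$ is a normal element of $\mathcal{T}_k$ mapping under $\pi$ to $z$, contradicting the Toeplitz obstruction.

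For $C_2$ (the Hawaiian earrings) and $C_3$, the circles share a common basepoint rather than shrinking to a separate limit point, so one rebuilds $A_n$ as a unital pullback that amalgamates the basepoint evaluations of all $C(S_k)$ and $\mathcal{T}_k$ factors over a common $\CC$. With this modification, the same connecting maps are surjective and the direct limit is $C(C_2)$ or $C(C_3)$; the partial-lift argument proceeds unchanged by extracting, via coordinate projection into some $\mathcal{T}_k$ and via Tietze, a normal lift of $z$ which cannot exist.

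The main obstacle is the second step: verifying that a unital $*$-homomorphism $\tilde\varphi\colon C(X)\to A_n$ with commutative range really produces a normal lift of the coordinate unitary in $\mathcal{T}_k$. This uses both that each coordinate projection $A_n\to\mathcal{T}_k$ is a unital $*$-homomorphism (so commutative ranges land in commutative subalgebras, where every element is normal) and that, under $\pi_{n,\infty}$ and the identification $\varinjlim A_n\cong C(C_i)$, the $k$-th coordinate becomes the restriction map $C(C_i)\to C(S_k)$. Setting up the fibered-product structure carefully, especially the basepoint amalgamation needed for $C_2$ and $C_3$, is the central bookkeeping challenge; once it is in place, the Fredholm-index obstruction of \autoref{exa:semiproj}(3) completes the proof in all three cases.
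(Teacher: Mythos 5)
Your overall strategy --- iterating the Toeplitz index obstruction over the infinitely many circles so that it survives to every finite stage of the inductive system --- is exactly the ``boosted version'' of \autoref{exa:semiproj}(3) that the paper (and \cite{SoeThi12CharCommutSP}) has in mind, and your constructions for $C_1$ and $C_2$ are correct: the connecting maps are surjective, the direct limit is $C(C_i)$, and a partial lift at stage $n$ composed with the coordinate projection onto $\mathcal{T}_k$ ($k\geq n$) produces a normal lift of a winding-number-one unitary, which the Fredholm index forbids.

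The gap is in the $C_3$ case. The loops of $C_3$ are the boundaries $L_k$ of the regions cut out between the verticals at $x=1/k$ and $x=1/(k+1)$; adjacent loops share an entire vertical arc, and \emph{no single point of $C_3$ lies on all of them} (the accumulation point $(0,0)$ lies on none). So $C_3$ is not a wedge of circles, and $C(C_3)$ is not a basepoint-amalgamated product of the $C(L_k)$: whatever basepoints you choose, the restriction map $C(C_3)\to\prod_k C(L_k)$ does not land in the subalgebra where all basepoint evaluations agree, so your amalgamated pullback has the wrong direct limit (a Hawaiian-earring-type algebra rather than $C(C_3)$) and no natural morphism from $C(X)$ into it. The repair is to fiber the Toeplitz algebras over $C(X)$ itself rather than over a common copy of $\CC$: with $y_0$ the accumulation point, set
\[
B_n:=\Big\{\big(f,(b_k)_{k\geq n}\big)\in C(X)\oplus\textstyle\prod_{k\geq n}\mathcal{T}_k \;:\; \pi(b_k)=f|_{L_k},\ \ \|b_k-f(y_0)1\|\to 0\Big\},
\]
with connecting maps that drop one Toeplitz coordinate at a time. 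These are surjective (lift $f|_{L_k}-f(y_0)$ isometrically), the kernels exhaust $\bigoplus_k\mathcal{K}$, so $\varinjlim_n B_n\cong C(X)$, and a partial lift of $\id_{C(X)}$ yields the same forbidden normal lift via the coordinate projection onto some $\mathcal{T}_k$. This uniform construction handles all three spaces at once (in particular it subsumes your $C_1$ and $C_2$ versions) and is essentially what is done in \cite{SoeThi12CharCommutSP}.
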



\providecommand{\bysame}{\leavevmode\hbox to3em{\hrulefill}\thinspace}
\providecommand{\noopsort}[1]{}
\providecommand{\mr}[1]{\href{http://www.ams.org/mathscinet-getitem?mr=#1}{MR~#1}}
\providecommand{\zbl}[1]{\href{http://www.zentralblatt-math.org/zmath/en/search/?q=an:#1}{Zbl~#1}}
\providecommand{\jfm}[1]{\href{http://www.emis.de/cgi-bin/JFM-item?#1}{JFM~#1}}
\providecommand{\arxiv}[1]{\href{http://www.arxiv.org/abs/#1}{arXiv~#1}}
\providecommand{\doi}[1]{\url{http://dx.doi.org/#1}}
\providecommand{\MR}{\relax\ifhmode\unskip\space\fi MR }
\providecommand{\MRhref}[2]{%
  \href{http://www.ams.org/mathscinet-getitem?mr=#1}{#2}
}
\providecommand{\href}[2]{#2}

\end{document}